\renewcommand\mathbb\mathbf
\def\K{\ensuremath{\mathbb{K}}}
\def\kay{\ensuremath{\mathbf{k}}}
\def\cd{\ensuremath{\mathcal{D}}}
\DeclareMathOperator{\Conv}{{Conv}}
\DeclareMathOperator{\codim}{codim}
\newcommand{\Q}{{\mathbb Q}}
\newcommand{\R}{{\mathbb R}}
\newcommand{\Z}{{\mathbb Z}}
\newcommand\td\widetilde
\newcommand{\calA}{{\mathcal A}}
\newcommand{\calO}{{\mathcal O}}
\newcommand{\OO}{{\mathcal O}}
\newcommand{\scrC}{{\mathscr C}}
\newcommand{\scrD}{{\mathscr D}}
\newcommand{\scrE}{{\mathscr E}}
\newcommand{\scrF}{{\mathscr F}}
\newcommand{\scrG}{{\mathscr G}}
\newcommand{\scrH}{{\mathscr H}}
\newcommand{\scrX}{{\mathscr X}}
\newcommand{\scrY}{{\mathscr Y}}
\def\presuper#1#2%
\def\Q{\mathbb{Q}}
\def\R{\mathbb{R}}
\def\P{\mathbb{P}}
\def\A{\mathbb{A}}
\def\Z{\mathbb{Z}}
\def\<{\ensuremath{\langle}}
\def\>{\ensuremath{\rangle}}
\DeclareMathOperator{\Div}{Div}
\DeclareMathOperator{\Spec}{Spec}
\DeclareMathOperator{\val}{val}
\newcommand\djunion\amalg
\numberwithin{equation}{section}
\newtheorem{theorem}[equation]{Theorem}
\newtheorem{lemma}[equation]{Lemma}
\newtheorem{corollary}[equation]{Corollary}
\theoremstyle{definition}
\newtheorem{definition}[equation]{Definition}
\newtheorem{example}[equation]{Example}
\theoremstyle{remark}
\newtheorem{remark}[equation]{Remark}
\newcommand\bb[1]{\ensuremath{{\mathbb{#1}}}}
\DeclareSymbolFont{symbolsC}{U}{pxsyc}{m}{n}
\DeclareMathSymbol{\coloneq}{\mathrel}{symbolsC}{66}
\renewcommand\bb[1]{\ensuremath{{\mathbf{#1}}}}
\renewcommand\R{\bb R}
\renewcommand\Z{\bb Z}
\renewcommand\Q{\bb Q}
\begin{document}


\title{Newton--Okounkov Bodies over Discrete Valuation Rings and Linear Systems on Graphs}
\author{Eric Katz}
\address{Department of Mathematics, The Ohio State University, 231 W. 18th Avenue, Columbus, OH, 43210, USA}
\email{katz.60@osu.edu}
\author{Stefano Urbinati}
\address{Dipartimento di Matematica, La Nave, Politecnico di Milano, Via Edoardo Bonardi 9, 20133 Milano, Italy}
\email{urbinati.st@gmail.com}

\subjclass{}

\date{}
\thanks{The second author was supported by the European Commission, Seventh Framework Programme, Grant Agreement n$^{\circ}$ 600376.}

\begin{abstract}
The theory of Newton--Okounkov bodies attaches a convex body to a line bundle on a variety equipped with flag of subvarieties. This convex body encodes the asymptotic properties of sections of powers of the line bundle. In this paper, we study Newton--Okounkov bodies for schemes defined over discrete valuation rings. We give the basic properties and then focus on the case of toric schemes and semistable curves. We provide a description of the Newton--Okounkov bodies for semistable curves in terms of the Baker--Norine theory of linear systems on graphs, finding a connection with tropical geometry. We do this by introducing an intermediate object, the Newton--Okounkov linear system of a divisor on a curve. We prove that it is equal to the set of effective elements of the real Baker--Norine linear system of the specialization of that divisor on the dual graph of the curve. As a bonus, we obtain an asymptotic algebraic geometric description of the Baker--Norine linear system.
\end{abstract}

\maketitle

\section{Introduction}
\label{sec:introduction}

The theory of Newton polytopes \cite[Ch.~6]{GKZ} is a bridge between algebraic geometry and polyhedral geometry. Specifically, if one is given a Laurent polynomial
\[f=\sum_{\omega\in \calA} c_\omega x^\omega\in\kay[x_1^{\pm},\dots,x_n^{\pm}]\]
where $\calA\subset \Z^n$ is a finite set, $\kay$ is a field, and $c_\omega\in\kay^*$, then the Newton polytope of $f$ is the convex hull of $\calA$.  The Newton polytope can be used to understand the intersection theory of the zero locus $Z(f)\subset (\kay^*)^n$ through Bernstein's theorem. There are several ways to generalize Newton polytopes beyond hypersurfaces of the algebraic torus $(\kay^*)^n$. Tropical geometry is one such way, handling higher codimensional subvarieties of tori by studying polyhedral fans instead of polytopes. Another way, the theory of Newton--Okounkov bodies \cite{Okounkov,kk,lm} attaches a convex body to a smooth projective $d$-dimensional algebraic variety $X$ equipped with a divisor $D$ and a flag of subvarieties 
\[Y_{\bullet} : X = Y_{0} \supsetneq Y_{1} \supsetneq ... \supsetneq Y_{d-1} \supsetneq Y_{d} = \{pt\}.\]
 Specifically, the Newton--Okounkov body encodes the generic vanishing of sections of $\OO(mD)$ for $m\in\Z_{\geq 1}$. It is convex and bounded and has many of the desirable properties of Newton polytopes. However, except in low dimensions, it may be non-polyhedral. Moreover, it encodes information about the asymptotic behavior of $\OO(mD)$ making it much more difficult to compute.
 
 Newton polytopes and tropical geometry both have extensions to the ``non-constant coefficient case.'' Specifically, if one has a valued field $\K$ with $\val\colon\K\to\R$ (say, the fraction field of a discrete valuation ring $\OO$), one obtains an unbounded convex body by attaching to
  \[f=\sum_{\omega\in \calA} c_\omega x^\omega\in \K[x_1^{\pm},\dots,x_n^{\pm}],\]
 the {\em upper hull} which is defined to be the convex hull of the set
\[\{(\omega,t)\mid \omega\in\calA,\ t\geq\val(a_\omega)\}\subset \R^n\times\R.\]
 The lower faces of this set induce a subdivision of the Newton polytope $P(f)$ \cite{GKZ, Gubler}. This is the Newton subdivision. The lower faces of the upper hull are the graph of a piecewise linear convex function $\psi\colon P(f)\to \R$. In greater generality, tropical geometry attaches a polyhedral complex to a subvariety $X\subset (\K^*)^n$. 
 
 The purpose of this paper is to consider Newton--Okounkov bodies in the non-constant coefficient case. 
To set notation, let $\OO$ be a discrete valuation ring with fraction field $\K$ and residue field $\kay$.  
Let $\pi$ be a uniformizer of $\OO$.   For convenience, we shall call {\em semistable} any irreducible, regular scheme that is proper, flat, and of finite type over $\OO$ whose generic fiber is smooth and whose closed fiber is a reduced normal crossings divisor.
Let $\scrX$ be a projective regular semistable scheme over $\OO$ with closed fiber $X$.
Let $\scrY_\bullet$ denote a descending flag of subschemes
\[\scrX=Y_0\supsetneq Y_1\supsetneq ...\supsetneq Y_{d+1}\]
 where each $Y_i$ is a codimension $i$ subscheme that is either a semistable scheme over $\OO$ or a smooth subvariety of a component of the closed fiber $\scrX\times_\OO \kay$.   Let $\scrD$ be a  divisor on $\scrX$ flat over $\Spec \OO$.  
 
We will define a Newton--Okounkov body,  $\underline\Delta_{\scrY_\bullet}(\scrD)\subset\R^{d+1}$ by considering sections of $\OO(m\scrD)$ for $m\in\Z_{\geq 1}$ evaluated at a valuation attached to the flag.  In contrast to the classical case but similar to upper hulls, this Newton--Okounkov body will be unbounded, albeit in a single direction.  This unboundedness is a consequence of the fact that if $s\in H^0(\scrX,\OO(\scrD))$, then $\pi^ks\in H^0(\scrX,\OO(\scrD))$ for any $k\geq 0$.  This is formalized by the following theorem:
\begin{theorem}
Let $p_{\pi}:\R^{d+1}\rightarrow \R^d$ be projection along the direction through the valuation vector of $\pi$.  
The image $\Delta=p_{\pi}(\underline{\Delta}_{\scrY_\bullet}(\OO(\scrD))$ is compact.
\end{theorem}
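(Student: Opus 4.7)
The plan is to realize $\Delta = p_\pi(\underline\Delta_{\scrY_\bullet}(\OO(\scrD)))$ as a classical Newton--Okounkov body on the generic fiber $\scrX_K$ and then invoke the theorem of Kaveh--Khovanskii / Lazarsfeld--Mustata, which gives that the classical body is a bounded convex subset of $\R^d$. The starting point is the fact, already used in the discussion preceding the statement, that multiplication by $\pi$ on sections induces translation in the valuation image by the fixed vector $\nu(\pi) \in \Z^{d+1}$. This follows because the flag valuation is defined by successive vanishing orders, so $\nu(\pi s) = \nu(s) + \nu(\pi)$ for every nonzero section $s$. In particular $\nu(\pi) \neq 0$, as at least one member of $\scrY_\bullet$ must lie in the closed fiber in order to cut out a flag of length $d+1$ on $\scrX$, and $p_\pi$ is by construction the projection modulo $\R \cdot \nu(\pi)$.

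The next step is to identify the projected image with a classical Newton--Okounkov set. Because $\scrX$ is projective, flat over $\OO$, and $\scrD$ is flat over $\Spec\OO$, the $\OO$-module $H^0(\scrX, \OO(m\scrD))$ is finitely generated and torsion-free, and inverting $\pi$ gives a canonical identification $K \otimes_\OO H^0(\scrX, \OO(m\scrD)) \isom H^0(\scrX_K, \OO(m\scrD_K))$. Killing the subspace spanned by $\nu(\pi)$ amounts to passing from sections over $\OO$ to sections over $K$, up to scaling by $\OO^\times$. I would then check that the composite
\[
H^0(\scrX_K, \OO(m\scrD_K)) \setminus \{0\} \;\longrightarrow\; \Z^{d+1}/\Z\nu(\pi) \;\isom\; \Z^d
\]
is, up to a unimodular change of coordinates, the Lazarsfeld--Mustata valuation attached to the flag on $\scrX_K$ obtained by restricting the semistable members of $\scrY_\bullet$ to the generic fiber; the members of $\scrY_\bullet$ contained in the closed fiber contribute only to the $\nu(\pi)$-direction and disappear after projection.

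Granting that identification, $\Delta$ equals (the closure of the convex hull of the rescaled union of the generic-fiber valuation images), which is the classical Newton--Okounkov body $\Delta_{Y_{\bullet,K}}(\scrD_K) \subset \R^d$ of the smooth projective $K$-variety $\scrX_K$ equipped with the flag $Y_{\bullet, K}$ and divisor $\scrD_K$; this body is compact by the standard theorem. The main obstacle, and the step deserving the most care, is the middle one: verifying that a mixed flag whose first several members are semistable families and whose remaining members lie in one chosen component of the closed fiber really does produce, after quotienting by $\nu(\pi)$, the classical flag valuation on the generic fiber. This requires a careful computation of successive vanishing orders in a neighborhood of the point $Y_{d+1}$, using that $\scrX$ is regular and the closed fiber is a reduced normal crossings divisor, so that one can coordinatize and read off both $\nu$ and $\nu(\pi)$ explicitly.
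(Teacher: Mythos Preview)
Your proposed identification of $p_\pi(\underline\Delta_{\scrY_\bullet}(\scrD))$ with a classical Newton--Okounkov body on the generic fiber $\scrX_K$ is correct only in the \emph{tropical case} $j=d+1$, where all of $Y_1,\dots,Y_d$ are semistable over $\OO$ and only $Y_{d+1}$ lies in the closed fiber. In that situation $\nu(\pi)$ is essentially $e_{d+1}$, the generic-fiber flag $Y_{\bullet,K}$ has full length $d$, and the paper does prove (in a later theorem) that the projection equals $\Delta_{Y_{\bullet,K}}(\scrD_K)$. But the present statement is asserted for an arbitrary admissible flag, and your key claim that ``the members of $\scrY_\bullet$ contained in the closed fiber contribute only to the $\nu(\pi)$-direction and disappear after projection'' fails whenever $j\le d$. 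Concretely, in the Arakelovian case $j=1$ every $Y_i$ with $i\ge 1$ lies in the closed fiber and the generic-fiber flag is trivial; there is no flag on $\scrX_K$ to compare to, yet the projected body is a genuine $d$-dimensional object whose boundedness still needs proof. More generally, the vertical members $Y_{j+1},\dots,Y_{d+1}$ produce $d+1-j$ valuation coordinates that are not annihilated by quotienting out the single line $\R\nu(\pi)$, while the generic-fiber flag has length only $j-1<d$, so the two sides cannot match up.

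The paper's argument handles precisely this issue. It bounds $\nu_1,\dots,\nu_{j-1}$ via the generic fiber (essentially your idea), but then, to control the remaining coordinates, it reduces to the case $j=1$, multiplies by powers of $\pi$ to normalize along $Y_j$, restricts to the smooth projective $\kay$-variety $Y_j$, chooses an ample class $h$ there, and runs the standard Lazarsfeld--Musta\c{t}\u{a} intersection-theoretic bound on $Y_j$ to control $\nu_{j+1},\dots,\nu_{d+1}$. The finiteness of the set of line bundles arising in the restriction step is also needed. Your outline omits this entire second half; the ``careful computation'' you flag at the end would reveal the mismatch in ranks rather than confirm the identification.
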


In fact, if $Y_d$ is semistable over $\OO$ and $Y_{d+1}$ is a point of $Y_d\times_\OO \kay$, we are in what we call the {\em tropical case} and more can be said:
\begin{theorem}
In the tropical case, $\underline{\Delta}_{\scrY_\bullet}(\scrD)$ is given by the overgraph of a convex function $\psi\colon p_{\pi}(\underline{\Delta}_{\scrY_\bullet}(\scrD))\rightarrow\R$.
\end{theorem}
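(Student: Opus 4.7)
The plan is to identify the ``$\pi$-direction'' as a single coordinate axis of the Newton--Okounkov body and then reduce the overgraph claim to the convexity and closedness of $\underline{\Delta}_{\scrY_\bullet}(\scrD)$.

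First I would compute the valuation vector $\nu(\pi)$ of the uniformizer. In the tropical case, each $Y_i$ with $i\leq d$ is semistable over $\OO$, hence flat, so its generic point does not lie over the closed point of $\Spec\OO$. Consequently $\pi$ has vanishing order zero along every $Y_i$, contributing zero to each of the first $d$ coordinates of $\nu(\pi)$. At the final step we are left with a section on the regular one-dimensional semistable scheme $Y_d$, whose local ring at the closed point $Y_{d+1}$ is a discrete valuation ring in which $\pi$ is a uniformizer (the semistability of $Y_d$ guarantees that its closed fiber is reduced at $Y_{d+1}$, so $\pi$ generates the maximal ideal there). Hence $\nu(\pi) = (0,\ldots,0,1)$ and $p_{\pi}$ is ordinary projection onto the first $d$ coordinates.

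Next, for any nonzero $s \in H^0(\scrX, \OO(m\scrD))$ and any integer $k\geq 0$, the section $\pi^k s$ lies in the same space, and the valuation property gives $\nu(\pi^k s) = \nu(s) + k(0,\ldots,0,1)$. Dividing by $m$, the defining set of $\underline{\Delta}_{\scrY_\bullet}(\scrD)$ is closed under translation by $(0,\ldots,0,k/m)$ for every $m\geq 1$ and $k\geq 0$, and passing to the closure, $\underline{\Delta}_{\scrY_\bullet}(\scrD)$ is invariant under all nonnegative translations in the last coordinate. The previous theorem gives that $\Delta := p_{\pi}(\underline{\Delta}_{\scrY_\bullet}(\scrD))$ is compact, forcing each nonempty fiber of $p_{\pi}$ to be bounded below. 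Combined with closedness of $\underline{\Delta}_{\scrY_\bullet}(\scrD)$, every fiber over $v\in\Delta$ has the form $\{v\}\times[\psi(v),\infty)$ for a uniquely determined $\psi(v)\in\R$.

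Finally, $\underline{\Delta}_{\scrY_\bullet}(\scrD)$ is convex by the standard semigroup argument applied to $\bigoplus_m H^0(\scrX,\OO(m\scrD))$ equipped with the flag valuation, so its coincidence with the epigraph of $\psi$ over the convex set $\Delta$ is equivalent to convexity of $\psi$. The main obstacle I anticipate is in the first step: one has to verify that the iterative restriction-and-order-of-vanishing procedure really does define a valuation (rather than just a rank-one filtration) on the graded ring of sections, and that semistability at each stage ensures $\pi$ restricts to a local parameter of $Y_d$ at $Y_{d+1}$; once the identification $\nu(\pi) = (0,\ldots,0,1)$ is in hand, the overgraph property is a formal consequence of convexity and closedness.
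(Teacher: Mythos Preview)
Your approach is essentially the paper's: identify $\nu(\pi)=e_{d+1}$ in the tropical case, use closure of $\underline{\Delta}_{\scrY_\bullet}(\scrD)$ under positive translation by $\nu(\pi)$, and then invoke convexity and closedness to conclude the overgraph description. The paper's proof is terser and simultaneously establishes that $p_\pi(\underline{\Delta}_{\scrY_\bullet}(\scrD))$ equals the classical body $\Delta_{Y_\bullet}(D)$ on the generic fiber, but the logical skeleton is identical, and your extra detail on why $\nu(\pi)=(0,\dots,0,1)$ is correct and welcome.

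There is one genuine slip. You write that compactness of $\Delta=p_\pi(\underline{\Delta}_{\scrY_\bullet}(\scrD))$ ``forc[es] each nonempty fiber of $p_\pi$ to be bounded below.'' This implication is false: the set $[0,1]\times\R\subset\R^2$ is closed, convex, invariant under upward translation, and has compact projection $[0,1]$, yet every fiber is all of $\R$. The correct reason the fibers are bounded below is that each $\nu_i(s)$ is a vanishing order of a regular section and hence non-negative, so $\underline{\Delta}_{\scrY_\bullet}(\scrD)\subset\R_{\geq 0}^{d+1}$. With that replacement your argument is complete; the paper's own proof leaves this point implicit for the same reason.
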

Here, the overgraph is the set of points of $p_{\pi}(\underline{\Delta}_{\scrY_\bullet}(\scrD))\times\R$ lying above the graph of $\psi$.

We will give a complete description in the special case of Newton--Okounkov bodies of toric schemes with respect to a toric flag. Here, it is analogous to the field case and involves a particular polyhedron $P_\scrD$  depending on $\scrD$, and $\phi_\R$, a certain linear map depending on $\scrY_{\bullet}$.
\begin{theorem}
Let $\scrD$ be a torus-invariant divisor on a toric scheme $\scrX$ that is flat over $\Spec \OO$ with generic fiber $D$ such that $\OO(D)$ is a big line bundle on $X$.  
Then, the Newton--Okounkov body of $\OO(\scrD)$ is given by
\[\underline{\Delta}_{\scrY_\bullet}(\scrD)=\phi_\R(P_\scrD).\]
\end{theorem}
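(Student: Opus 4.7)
The plan is to reduce the computation to the toric dictionary over a DVR, in which sections of torus-invariant divisors are spanned by monomials. Since $\scrX$ is a toric scheme over $\OO$ (associated to a fan in $N_\R\times\R_{\geq 0}$) and $\scrD$ is a torus-invariant flat divisor, the space $H^0(\scrX,\OO(m\scrD))$ has an $\OO$-basis of ``monomials'' $\pi^a\chi^u$ indexed by lattice points $(u,a)\in M\times\Z_{\geq 0}$ lying in $mP_\scrD$. Here $P_\scrD\subset M_\R\times\R_{\geq 0}$ is the unbounded polyhedron attached to $\scrD$ by the standard dictionary for toric schemes over a DVR, and its unboundedness in the $\R_{\geq 0}$ direction matches the $\pi$-direction unboundedness of $\underline{\Delta}_{\scrY_\bullet}(\scrD)$ noted in the first theorem.

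First I would show that the toric flag $\scrY_\bullet$ produces a monomial valuation on $\K(\scrX)=\K(T)$. By toricity, in a suitable $T$-invariant affine chart of $\scrX$, each $Y_i$ is cut out by a single monomial $\pi^{b_i}\chi^{e_i}$ (with either $b_i=0$ or $e_i=0$, depending on whether the step in the flag is horizontal or vertical). The lex-valuation $\nu_{\scrY_\bullet}$ attached to the flag is then computable termwise: it sends $\pi^a\chi^u$ to a vector in $\R^{d+1}$ whose $i$-th coordinate records the order of vanishing along $Y_i$, and this vector is a linear function of $(u,a)$. Writing $\phi_\R\colon M_\R\oplus\R\to\R^{d+1}$ for this linear map gives $\nu_{\scrY_\bullet}(\pi^a\chi^u)=\phi_\R(u,a)$; this is the $\phi_\R$ appearing in the statement.

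Next, for an arbitrary section $s=\sum c_{u,a}\,\pi^a\chi^u\in H^0(\scrX,\OO(m\scrD))$, I would argue that $\nu_{\scrY_\bullet}(s)=\phi_\R(u_0,a_0)$, where $(u_0,a_0)$ minimizes $\phi_\R$ in the lex-order over the support of $s$. Since the lex-order on $\R^{d+1}$ is total, the lex-minimal term cannot cancel against any other, so it realizes $\nu_{\scrY_\bullet}(s)$. Taking the union of such vectors over all $m$ and all $s$, normalizing by $m$, and passing to closures yields
\[
\underline{\Delta}_{\scrY_\bullet}(\scrD) \;=\; \overline{\bigcup_{m\geq 1} \tfrac{1}{m}\,\phi_\R\bigl((mP_\scrD)\cap (M\times\Z)\bigr)} \;=\; \phi_\R(P_\scrD),
\]
where the second equality uses that $\Q$-points are dense in $P_\scrD$ and that $\phi_\R$ is continuous, while bigness of $D$ ensures lattice points of $mP_\scrD$ asymptotically fill out $P_\scrD$.

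The main obstacle is the second step: extracting $\phi_\R$ explicitly from the flag and establishing the monomiality of $\nu_{\scrY_\bullet}$, especially at the transition between \emph{horizontal} strata (semistable over $\OO$) and \emph{vertical} strata (contained inside a component of the special fiber). One must also handle ties when $\phi_\R$ is not injective on the lattice, but after grouping coefficient-wise equal monomials and invoking $\kay$-linear independence of their reductions, the lex-minimum argument still produces $\nu_{\scrY_\bullet}(s)$, and bigness still gives density of the realized values in $\phi_\R(P_\scrD)$.
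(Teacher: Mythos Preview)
Your proposal is correct and follows essentially the same route as the paper. The paper writes a section as a $\K$-linear combination $s=\sum_m c_m\chi^m$, computes the vanishing order along each toric divisor $D_{w_i}$ as $b_{w_i}=\min\langle(m,\val(c_m)),w_i\rangle$, identifies $\nu(s)$ with the tuple $(b_{w_i}+a_{w_i})_i$, and then observes that monomials $\pi^h\chi^m$ with $(m,h)\in P_\scrD$ already fill out $\phi_\R(P_\scrD)$; your version uses the $\OO$-basis $\{\pi^a\chi^u:(u,a)\in mP_\scrD\}$ and argues via lex-minimality, which is the same computation in slightly different bookkeeping. Two small remarks: since the paper stipulates that $\{w_1,\dots,w_{d+1}\}$ is a basis of $N\times\Z$, the map $\phi_\R$ is a lattice isomorphism and there are no ties to worry about; and your description of local equations as $\pi^{b_i}\chi^{e_i}$ with ``either $b_i=0$ or $e_i=0$'' is not quite accurate (a vertical ray through a vertex $(v,1)$ with $v\neq 0$ has primitive vector $(v,1)$), but this does not affect the argument.
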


Newton--Okounkov bodies over $\kay$ in low dimensions turn out to be particularly tractable. In the case of curves, the Newton--Okounkov body is the interval $[0,\deg(D)]$ and so captures	only the degree of the divisor. In the case of surfaces, the Newton--Okounkov body is a polytope encoding the Zariski decomposition of a family of divisors. The case of semistable curves over discrete valuation rings shares features of both of these cases. This is perhaps not surprising because such a curve is of relative dimension $1$ and absolute dimension $2$. We will give a fairly complete description of such Newton--Okounkov bodies in terms of the Baker--Norine theory of linear systems on graphs \cite{BN}. We will phrase our description in language reminiscent of the Zariski decomposition of divisors.

To describe the case of semistable curves, we choose to introduce an intermediate object, the {\em Newton--Okounkov linear system}, $L_\Delta^+(\scrD)$. Like Newton--Okounkov bodies, it measure the asymptotics of vanishing orders of sections of $\OO(m\scrD)$ for $m\in\Z_{\geq 1}$. However, instead of incorporating vanishing orders on a flag, it incorporates vanishing orders 
on components of the special fiber. It is a convex subset of the space of functions $\varphi\colon V(\Sigma)\to \R$ where $\Sigma$ is the dual graph of the closed fiber of $\scrC$. We prove that the Newton--Okounkov linear system is combinatorial by relating it to a combinatorially-defined effective linear system $L^+(\rho(\scrD))$ where $\rho(\scrD)$ is the combinatorial specialization of the horizontal divisor $\scrD$ on the curve $\scrC$:

\begin{theorem} If the generic fiber of $\scrD$ has positive degree, then we have the equality between the Newton--Okounkov linear system and the  effective linear system:
\[L^+_\Delta(\scrD)=L^+(\rho(\scrD)).\]
\end{theorem}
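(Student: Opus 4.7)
The plan is to prove the two inclusions separately. For the forward inclusion $L^+_\Delta(\scrD) \subseteq L^+(\rho(\scrD))$, take a section $s\in H^0(\scrC,\OO(m\scrD))$ and let $\psi_s(v) = \tfrac{1}{m}\operatorname{ord}_{C_v}(s)$, where $C_v$ is the component of the closed fiber corresponding to $v \in V(\Sigma)$. Since $\scrD$ is flat (horizontal), writing $s = f \cdot s_0$ for a rational function $f$ on $\scrC$ one has $\operatorname{ord}_{C_v}(s) = \operatorname{ord}_{C_v}(f)$, while the effectiveness $\divv(s) \ge 0$ restricts on the generic fiber to $f|_C \in L(mD)$. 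The standard tropicalization formula relates $\rho(\divv(f|_C))$ to the graph Laplacian of $v \mapsto \operatorname{ord}_{C_v}(f)$; combining it with $\divv(f|_C) + mD \ge 0$ on $C$ yields $m\rho(\scrD) + \divv_{\Sigma}(m\psi_s) \ge 0$ on $\Sigma$ (up to sign conventions). Dividing by $m$ and passing to the closure, which by construction contains $L^+_\Delta(\scrD)$, gives the desired inclusion.

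For the reverse inclusion $L^+(\rho(\scrD)) \subseteq L^+_\Delta(\scrD)$, I would exploit that $L^+_\Delta(\scrD)$ is closed by construction and that $L^+(\rho(\scrD))$ has dense rational points (being, up to the constants direction, a rational polytope). A rational point has the form $\psi = \phi/m$ with $\phi \colon V(\Sigma) \to \Z$ satisfying $m\rho(\scrD) + \divv_{\Sigma}(\phi) \ge 0$, and by shifting $\phi$ by a constant, an operation absorbed into $L^+_\Delta(\scrD)$ via multiplication by a power of $\pi$, we may assume $\phi \ge 0$. It then suffices to construct, for some $N \ge 1$, a section $s \in H^0(\scrC,\OO(Nm\scrD))$ with $\operatorname{ord}_{C_v}(s) = N\phi(v)$ for every $v$, which places $\psi = \phi/m$ in $L^+_\Delta(\scrD)$.

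The main obstacle is this lifting step: realizing a combinatorial effectiveness certificate on $\Sigma$ by a genuine global section with the \emph{exact} prescribed vertical vanishing profile. I would attack it via the sublattice $H^0(\scrC, \OO(Nm\scrD - \sum_v N\phi(v) C_v)) \subset H^0(\scrC, \OO(Nm\scrD))$, whose elements vanish to order \emph{at least} $N\phi(v)$ on each $C_v$; the desired sections are those whose reductions to each $C_v$ do not vanish, i.e., those lying outside a proper closed subspace component-wise. The existence argument combines (i) Baker's specialization lemma of \cite{BN}, which bounds the combinatorial rank $r_\Sigma(Nm\rho(\scrD))$ by $\dim L(NmD)$, (ii) asymptotic Riemann--Roch on $C$, which under the positive-degree hypothesis on the generic fiber gives $\dim L(NmD) \sim Nm\deg(D)$, and (iii) a perturbation argument using nearby integer functions inside the rational polytope $L^+(\rho(\scrD))$ to certify nonvanishing of the required reductions on each $C_v$. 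The delicate part is ensuring that no global base-locus phenomenon on $\scrC$ forbids simultaneous equality across \emph{all} components $C_v$; this is handled by passing to a sufficiently divisible multiple of $m$ so that the asymptotic positivity on the generic fiber propagates to every component, after which a generic section in the sublattice realizes $N\phi$ exactly.
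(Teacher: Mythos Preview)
Your forward inclusion is essentially the paper's argument (their Lemma~\ref{l:spec}), so that part is fine.

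The reverse inclusion, however, has a genuine gap. You aim to realize a rational $\psi=\phi/m$ \emph{exactly} by producing $s\in H^0(\scrC,\OO(Nm\scrD))$ with $\operatorname{ord}_{C_v}(s)=N\phi(v)$ for every $v$. This can simply fail. If $\psi$ lies on a face of $L^+(\rho(\scrD))$ where $\Delta(\phi)(v)+m\rho(\scrD)(v)=0$ for some $v$, then the twisted bundle $\OO(Nm\scrD-\sum_w N\phi(w)C_w)$ restricts to $C_v$ as a degree-$0$ line bundle; if that bundle is nontrivial (e.g.\ $C_v$ has positive genus), it has no nonzero sections, and \emph{every} section of the twisted bundle vanishes identically on $C_v$. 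No multiple $N$ cures this, so ``passing to a sufficiently divisible multiple'' does not handle the base-locus issue you flag. Your appeal to Baker's specialization inequality and asymptotic Riemann--Roch gives dimension counts but does not force the simultaneous nonvanishing of restrictions you need, and the ``perturbation argument'' is not made precise enough to rescue the boundary case.

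The paper sidesteps exact realization entirely. Given $\vartheta\in L^+(\Lambda)$ rational and normalized, it subtracts an effective graph divisor $E$ so that $\Delta(m\vartheta)+m\Lambda-E$ has degree exactly $g$, lifts $E$ to a horizontal $\scrE$, and uses Riemann--Roch on the generic fiber (degree $g$ guarantees a section of $\OO(m\scrD_\K-\scrE_\K)$). The resulting section $s$ has vanishing function $\varphi_s$ which need not equal $m\vartheta$, but the key point is a purely combinatorial estimate (their Corollary~\ref{c:bound}): since $\Delta(\varphi_s)-\Delta(m\vartheta)$ is a difference of effective degree-$g$ divisors, one gets $|\varphi_s/m-\vartheta|\leq g\operatorname{diam}(\Sigma)/m$. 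Letting $m\to\infty$ gives the approximation, and closure of $L^+_\Delta(\scrD)$ finishes. The moral is that you only need $\varrho_m(s)$ \emph{close} to $\vartheta$, with controlled error; trying to hit $\vartheta$ on the nose is both unnecessary and, at the boundary, impossible.
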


By incorporating the lattice of integer-valued functions $\varphi\colon V(\Sigma)\to\Z$,
this theorem gives an algebraic geometric description of rank in the Baker--Norine theory, a problem studied in work of Caporaso--Len--Melo \cite{CLM}. However, our description works only by studying the asymptotics of sections, and we do not know how to use our results to give algebraic geometric proofs of combinatorial results in the Baker--Norine theory.

After enriching the above theorem by incorporating vanishing orders of sections at a point in the closed fiber of $\scrC$, we are able to give a description of the Newton--Okounkov bodies of curves in Theorem~\ref{t:tropicalcurve} and Theorem~\ref{t:arakeloviancurve}.

We should note that recent, quite different connections between tropical geometry and Newton--Okounkov bodies were recently found by Kaveh--Manon \cite{KM}.

\subsection{Acknowledgments}
We would like to thank David Anderson and Alex K\"{u}ronya for enlightening conversations and helpful comments.

\section{Notation and conventions}

For a scheme $\scrX$ over $\OO$, we will use $\scrX_\K$ to denote its generic fiber. 
 If $\scrC$ is a semistable curve over $\OO$, irreducible divisors are either horizontal or vertical, see, for example, \cite[Sec.~8.3]{Liu}. Here, horizontal divisors are those that that are flat over $\Spec \OO$ while vertical divisors are contained in the closed fiber. Any divisor on $\scrC$ can be decomposed into a sum of horizontal and vertical divisors.

Recall that a convex cone in $\R^d$ is a convex set invariant under rescaling by elements of $\R_{\geq 0}$. For a set $S\subset\R^d$, we write $\text{cone}_{\R^{d+1}}(S)$ for the minimal convex cone containing $S$.

We will write $0$ for the empty divisor. For divisors $D,E$, we will write $D\leq E$ if $E-D$ is effective.
If $D$ is a divisor on a smooth variety $X$, sections of $\OO(D)$ can be interpreted in two ways: as a {\em section of a line bundle} or as a {\em rational function} $s$ whose principal divisor satisfies $(s)+D\geq 0$. Consequently if $D\leq E$, a section of $\OO(D)$ can be interpreted as a section of $\OO(E)$ although its zero locus will differ by $E-D$. We will point out the relevant interpretation by using the words ``section'' or ``rational function.''

\section{Construction of the Newton--Okounkov body}

We recall the construction of Newton--Okounkov bodies in the classical setting. 
Let $X$ be a smooth irreducible projective variety of dimension $d$ over a field $\K$. Given a divisor $D$ on X, we want to construct a convex compact subset of $\R^d$ called the \emph{Newton--Okounkov body of $D$}. A flag of smooth irreducible subvarieties
$$Y_{\bullet} : X = Y_{0} \supsetneq Y_{1} \supsetneq ... \supsetneq Y_{d-1} \supsetneq Y_{d} = \{pt\}$$ 
is called \emph{full and admissible} if $\codim Y_{i} = i$. For every non-zero section $s$ of $\OO(D)$, if $s_0:=s$, for $i=1,\dots, n$ define 
\begin{equation}
	\nu_i(s) := {\rm ord}_{Y_i}(s_{i-1}), \qquad s_i:= \left.\frac{s_{i-1}}{g_i^{\nu_i(s)}}\right|_{Y_i},
\end{equation}
where $g_i$ is the local equation of $Y_i$ in $Y_{i-1}$ near $Y_n$.
Here, $s_0$ is considered as a section of $L_0=\OO(D)$ while $s_i$ is considered as a section on $Y_i$ of $L_i=L_{i-1}|_{Y_i}\otimes \OO_{Y_i}(-\nu_i(\varphi)Y_i)$.  
We obtain the vector $\nu(s) = (\nu_{1}(s), \dots, \nu_{d}(s))\in\Z^n$.
Define the semigroup of valuation vectors as
 \[\Gamma_{Y_\bullet}(D):=  \{ (\nu(s),m) \in \Z^n\times\Z_{\geq 1} \mid s \in  H^{0}(X, \mathcal{O}_{X}(mD))\} \]
and the \emph{Newton--Okounkov body} of $D$ as
\[\Delta_{Y_\bullet}(D) := \overline{\text{cone}_{\R^{d+1}}\left(\Gamma_{Y_\bullet}(D)\right)}\cap  \left(\R^n\times \{1\}\right).\]
We write $\Gamma_{Y_\bullet}(D)_m$ for $\Gamma_{Y_\bullet}(D)\cap (\Z^n\times\{m\})$.
The same construction can be performed for non-complete graded linear series as well, see \cite{kk}. 

\subsection{The case of curves}

%
%
Let us consider a curve $C$ of positive genus $g$ and a divisor $D$ of degree $d>0$. Let $Y_{\bullet}=\{ C \supsetneq p \}$, with $p$ a point. Then the Newton--Okounkov body is the segment $[0, d]$ by  \cite[Ex.~1.2]{lm} as a consequence of the Riemann-Roch theorem. This is a result that we will eventually generalize to the case of semistable curves over a discrete valuation ring.
%

\subsection{The case of surfaces}

In the case of surfaces, the Zariski decomposition of big divisors can be used to show that the Newton--Okounkov body lies between the graphs of two functions on an interval \cite[Sec.~6.2]{lm}. This description provides all the information about possible shapes of Newton--Okounkov bodies of surfaces. 

Any pseudoeffective divisor $D$ (that is, a divisor in the closure of the effective cone in the Neron--Severi group) can be written as $D=P_D +N_D$, where $P_D$ is nef, $P_D\cdot N_D=0$, and $N_D$ is effective with a negative definite intersection matrix.

Let us consider the rank two valuation induced by a general flag $Y_{\bullet}=\{ X \supsetneq C \supsetneq p \}$ such that $p \notin \mbox{supp}(N_D)$.

Let $\alpha(D)= \mbox{ord}_p(N_D)$, $\beta(D)=\alpha(D)+ C\cdot P_D$, and $\mu:= \sup\{x \mid D-xC \mbox{ is big}\}$. Then, we have by the recipe given in \cite{lm}
$$\Delta_{Y_{\bullet}}(D)=\{(x, y) \in \R^2 |\, 0 \leq x \leq \mu \mbox{ and } \alpha(D- xC) \leq y \leq \beta(D-xC)\}.$$

\begin{example} Let $X$ be the blow up of $\mathbb{P}^2$ at two points with exceptional divisors $E_{1}, E_{2}$ and consider the flag  $Y_{\bullet}=\{X \supsetneq l \supsetneq p \}$ given by a general line and a general point on it. Let $H$ denote the pullback of the class of a line in $\P^2$. 

Let $D = 2H - E_{1} - E_{2}$. In this case we have $\mu=1$ and 
the Zariski decomposition of $D-xH$ is the sum $(1-x)(2H-E_1-E_2) + x(H-E_1-E_2)$, obtaining the following body:

\begin{center}
\begin{tikzpicture}[line cap=round,line join=round,x=.7cm,y=.7cm]
\draw[->,color=black] (-1,0) -- (4,0);
\foreach \x in {1,2,3}
\draw[shift={(\x,0)},color=black] (0pt,2pt) -- (0pt,-2pt) node[below] {\footnotesize $\x$};
\draw[->,color=black] (0,-0.5) -- (0,4);
\foreach \y in {1,2,3}
\draw[shift={(0,\y)},color=black] (2pt,0pt) -- (-2pt,0pt) node[left] {\footnotesize $\y$};
\fill[color=red,fill=red,fill opacity=0.3] (0,0) -- (1,0) -- (0,2) -- cycle;

\draw [color=red] (0,2)-- (1,0);
\draw [color=red] (0,0)-- (1,0);
\draw [color=red] (0,0)-- (0,2);

\end{tikzpicture}
\end{center}
\end{example}

Newton--Okounkov bodies in higher dimensions can be much more complicated and there is no general strategy for writing them down. We will see that for toric schemes with respect to a torus-invariant flag, the Newton--Okounkov body is determined by combinatorics.

\section{Newton--Okounkov bodies over discrete valuation rings}

\subsection{Definition for schemes over discrete valuation rings}
We now describe the case of Newton--Okounkov bodies on schemes over discrete valuation rings.
Let $\OO$ be a discrete valuation ring with fraction field $\K$, residue field $\kay$, and valuation $\val$.  Let $\pi$ be a uniformizer of $\OO$.  Let $\scrX$  be an  $d$-dimensional semistable scheme over $\OO$. We will write $X=\scrX\times_\OO \K$ for the generic fiber of $\scrX$. Let $\scrY_\bullet$ denote a descending flag of proper subschemes
\[\scrX=Y_0\supsetneq Y_1\supsetneq ...\supsetneq Y_{d+1}\]
where each $Y_i$ is a codimension $i$ subscheme that is either a semistable scheme over $\OO$ or a proper smooth subvariety of the closed fiber $\scrX\times_\OO \kay$.   Let $\scrD$ be a divisor on $\scrX$ flat over $\OO$.   

Let $j$ be the index such that $Y_{j-1}$ is semistable over $\OO$ and $Y_{j}$ is a closed subvariety of $\scrX\times_\OO \kay$.  Then $Y_j$ is a component of $Y_{j-1}\times_\OO \kay$.  We will give names to special cases: when $j=1$, we are said to be in the {\em Arakelovian case}; when $j=d+1$, we are in the {\em tropical case}.  Here, the name ``Arakelovian'' is motivated by a construction by Yuan \cite{Y}. The name ``tropical'' is motivated by a fundamental notion in tropical geometry, the Newton subdivision \cite{Gubler} to which our construction specializes in the toric case. So in a certain sense, our work interpolates between tropical geometry and function field Arakelov theory. 

The Newton--Okounkov body $\underline{\Delta}_{\scrY_\bullet}(\scrD)$ is defined as a convex body in $\R^{d+1}$ exactly as above by considering sections of  $\OO(m\scrD)$ over $\scrX$ for $m\in \Z_{\geq 1}$ evaluated at the valuations attached to the flag.

\subsection{Boundedness}

In contrast to the field case, Newton--Okounkov bodies over discrete valuation rings are not bounded. However, the failure of boundedness can be precisely described.

\begin{lemma} Let $\nu(\pi)\in \R^{d+1}$ be the valuation of the uniformizer $\pi\in \OO$, viewed as a rational function on $\scrX$.  The Newton--Okounkov body $\underline{\Delta}_{\scrY_\bullet}(\scrD)$ is closed under positive translations in the $\nu(\pi)$-direction.
\end{lemma}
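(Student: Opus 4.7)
The plan is to exhibit $(\nu(\pi), 0) \in \R^{d+1} \times \R$ as a recession direction of the closed convex cone $\overline{\text{cone}_{\R^{d+2}}(\Gamma_{\scrY_\bullet}(\scrD))}$. Invariance of $\underline{\Delta}_{\scrY_\bullet}(\scrD)$ under positive translations by $\nu(\pi)$ then follows immediately by slicing with the affine hyperplane $\R^{d+1} \times \{1\}$.

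First I would verify the key mechanism. Since $\scrX$ is an $\OO$-scheme, the uniformizer $\pi$ pulls back to a global regular function, so $\pi s$ is again a section of $\OO(m\scrD)$ whenever $s$ is. The flag valuation $\nu$ attached to $\scrY_\bullet$ is multiplicative on the function field of $\scrX$, by a routine induction on the length of the flag: $\nu_i(fg) = \nu_i(f) + \nu_i(g)$ at each level, and the tails factor as $(fg)_i = f_i \cdot g_i$ (interpreted as restrictions to $Y_i$). Consequently $\nu(\pi^k s) = k \nu(\pi) + \nu(s)$, so $\Gamma_{\scrY_\bullet}(\scrD)$ is stable under the operation $(w, m) \mapsto (w + k\nu(\pi), m)$ for every $k \in \Z_{\geq 0}$.

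Next, assuming $\Gamma_{\scrY_\bullet}(\scrD)$ is nonempty (the statement being vacuous otherwise), I would fix any $(w, m)$ in it and observe that $\tfrac{1}{k}(w + k\nu(\pi), m) = (\nu(\pi) + w/k, m/k)$ lies in $\text{cone}(\Gamma_{\scrY_\bullet}(\scrD))$ for every $k \geq 1$. Letting $k \to \infty$ places $(\nu(\pi), 0)$ in the closure of the cone. Closed convex cones are stable under addition, so if $(v, 1) \in \overline{\text{cone}(\Gamma_{\scrY_\bullet}(\scrD))}$ and $t \geq 0$, then $(v, 1) + t(\nu(\pi), 0) = (v + t\nu(\pi), 1)$ also lies in this cone, whence $v + t\nu(\pi) \in \underline{\Delta}_{\scrY_\bullet}(\scrD)$.

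I do not anticipate a genuine obstacle. The one point requiring attention is multiplicativity of $\nu$ at the index $j$ where $\scrY_\bullet$ switches from horizontal (semistable over $\OO$) to vertical (contained in $\scrX \times_\OO \kay$); but in either case $g_i$ is a well-defined local equation of $Y_i$ in $Y_{i-1}$, so the inductive argument for multiplicativity is unaffected and no new input is needed beyond the classical setup.
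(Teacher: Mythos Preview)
Your proof is correct and follows essentially the same approach as the paper: both establish that $\Gamma_{\scrY_\bullet}(\scrD)$ is invariant under integer translations by $(\nu(\pi),0)$ via the observation that $\pi^k s \in H^0(\scrX,\OO(m\scrD))$ with $\nu(\pi^k s) = \nu(s) + k\nu(\pi)$. The paper simply asserts that this semigroup invariance suffices, while you spell out the passage to real translations by exhibiting $(\nu(\pi),0)$ as a recession direction of the closed cone; this extra care is reasonable but not a different strategy.
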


\begin{proof}
It suffices to show $\Gamma_{\scrY_\bullet}(\scrD)+k(\nu(\pi),0)\subset\Gamma_{\scrY_\bullet}(\scrD)$ for any $k\in\Z_{\geq 0}$.  Any point of $\Gamma_{\scrY_\bullet}(\scrD)$ is of the form $(\nu(s),m)$ for $s\in H^0(\scrX,\calO(m\scrD))$.  Now, $\pi^ks\in H^0(\scrX,\calO(m\scrD))$ and $\nu(\pi^ks)=\nu(s)+k\nu(\pi)$.
\end{proof}

The Newton--Okounkov body is bounded in other directions.  Let $p_{\pi}\colon\R^{d+1}\rightarrow\R^{d+1}/(\R\nu(\pi))$ be the projection along the $\nu(\pi)$-direction
\begin{lemma} The image under projection, $p_{\pi}(\underline{\Delta}_{\scrY_\bullet}(\scrD))$ is bounded.
\end{lemma}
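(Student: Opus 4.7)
The plan is to reduce boundedness to two applications of the classical Newton--Okounkov boundedness theorem: once on the generic fiber $X = \scrX_\K$ and once on the closed-fiber component $Y_j$. The first step is to identify $\nu(\pi)$. Since $\pi$ is a regular function whose zero locus is exactly the closed fiber, we get $\nu_i(\pi) = 0$ for every horizontal $Y_i$ (i.e. $i<j$), while $\nu_j(\pi) = 1$ on the vertical component $Y_j$. Once $g_j$ is chosen as a local parameter cutting out the closed fiber near $Y_j$ we have $\pi/g_j\equiv 1$ on $Y_j$, whence $\nu_i(\pi)=0$ for $i>j$. Thus $\nu(\pi) = e_j$ and $p_\pi$ is the coordinate projection forgetting the $j$-th entry. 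Any nonzero $s \in H^0(\scrX, \OO(m\scrD))$ factors uniquely as $s = \pi^k s^*$ with $s^*$ not divisible by $\pi$, and $p_\pi(\nu(s)) = p_\pi(\nu(s^*))$, so it suffices to bound the coordinates of $\nu(s^*)/m$ other than the $j$-th one, uniformly in $m$ and $s^*$.

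For $i<j$: the generic fibers $Y_1^\K \supsetneq \dots \supsetneq Y_{j-1}^\K$ form a partial flag of smooth subvarieties of $X$, and because the $Y_i$ are horizontal, local equations for them may be chosen as rational functions on $\scrX$ regular on the generic fiber. Consequently the flag-valuation procedure for $s^*$ commutes with restriction to the generic fiber, giving $\nu_i(s^*) = \nu_i^\K(s^*_\K)$ for $i<j$, where $\nu^\K$ is the classical Newton--Okounkov valuation on $X$ associated to any full-flag extension of the partial flag. Classical boundedness of $\Delta(D)$ on $X$ then yields $\nu_i(s^*)/m \leq C_i$, uniformly.

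For $i>j$: the normalized section $s_j = s^*_{j-1}/\pi^{\nu_j(s^*)}|_{Y_j}$ is a nonzero global section on the smooth projective $\kay$-variety $Y_j$ of a line bundle $M_m$ whose numerical class is $(m\scrD - \sum_{i<j}\nu_i(s^*) Y_i)|_{Y_j}$ (the division by $\pi^{\nu_j(s^*)}$ is a principal divisor on $Y_{j-1}$ and so does not affect the class). The valuations $\nu_{j+1}(s^*),\dots,\nu_{d+1}(s^*)$ are precisely the classical Newton--Okounkov valuations of $s_j$ on $Y_j$ for the sub-flag $Y_j\supsetneq\dots\supsetneq Y_{d+1}$, so each is bounded by a Seshadri-type asymptotic invariant of $M_m$. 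Since such invariants are homogeneous and linear in the numerical class and $[M_m]/m$ is uniformly bounded by the previous step, we obtain $\nu_i(s^*)/m \leq C_i'$, uniformly. Combining with the first estimate gives the desired boundedness of $p_\pi(\underline{\Delta}_{\scrY_\bullet}(\scrD))$. The main obstacle is this final step, where one must control classical Newton--Okounkov valuations on $Y_j$ as the line bundle $M_m$ varies both with $m$ and with the intermediate valuations $\nu_i(s^*)$; this is handled by the homogeneity of the classical Newton--Okounkov body under rescaling of the divisor, together with the uniform bounds on $\nu_i(s^*)/m$ for $i<j$.
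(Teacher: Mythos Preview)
Your overall architecture matches the paper's: bound $\nu_i$ for $i<j$ using the classical theory on the generic fiber, then bound $\nu_i$ for $i>j$ using the classical theory on the closed-fiber component $Y_j$. However, two linked technical points are not handled correctly.

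First, the identification $\nu(\pi)=e_j$ is not valid in general. The local equation $g_j$ cuts out $Y_j$ in $Y_{j-1}$, whereas $\pi$ cuts out the \emph{entire} closed fiber of $Y_{j-1}$. If the closed fiber of $Y_{j-1}$ has other components $C_v$ and the sub-flag $Y_{j+1}\supsetneq\dots\supsetneq Y_{d+1}$ meets $Y_j\cap C_v$, then $(\pi/g_j)|_{Y_j}$ vanishes along $Y_j\cap C_v$ and $\nu_i(\pi)\neq 0$ for some $i>j$. (The paper's own remark after the lemma flags exactly this: only in the tropical case with $Y_{d+1}$ a smooth point of the central fiber is $p_\pi$ the obvious coordinate projection.)

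Second, and more seriously, your description of the line bundle $M_m$ on $Y_j$ is incorrect. By construction $s_j$ is a section of
\[
L_{j-1}|_{Y_j}\otimes\OO_{Y_j}\bigl(-\nu_j(s^*)\,Y_j|_{Y_j}\bigr),
\]
and the twist by $-\nu_j(s^*)\,Y_j|_{Y_j}$ is \emph{not} numerically trivial unless $Y_j$ is the entire closed fiber of $Y_{j-1}$. Your justification (``division by $\pi^{\nu_j(s^*)}$ is principal'') conflates $g_j$ with $\pi$: one divides by $g_j^{\nu_j}$, and while $\pi$ is globally principal, $g_j$ is only a local equation and $Y_j$ is not globally principal on $Y_{j-1}$ when other components are present. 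Concretely, since $(\pi)=Y_j+\sum_{v\neq j}C_v$ on $Y_{j-1}$, one has $[Y_j|_{Y_j}]=-\sum_{v\neq j}[C_v|_{Y_j}]$, so $[M_m]/m$ carries a term $(\nu_j(s^*)/m)\sum_{v\neq j}[C_v|_{Y_j}]$. Your normalization $s=\pi^k s^*$ on $\scrX$ only ensures that $s^*$ does not vanish on \emph{every} component of $\scrX_0$; it does not bound $\nu_j(s^*)/m$, which is precisely the unbounded direction you are projecting away. Hence $[M_m]/m$ is not bounded by your previous step, and the final intersection-theoretic estimate does not go through.

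The paper circumvents this by normalizing on $Y_{j-1}$ rather than on $\scrX$: it passes to sections of $H^0(X,\OO(D))$ on the generic fiber (equivalently, rational sections on the model), observes that the resulting body is the Minkowski sum of the one for regular sections with the line $\R\,\nu(\pi)$, and then restricts attention to the slice where $\operatorname{ord}_{Y_j}=0$. On that slice the line bundle being restricted to $Y_j$ no longer depends on $\nu_j$, and the ample-class bound on $Y_j$ applies directly.
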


\begin{proof}
Choose an ample divisor  $H$ on $X$ and an ample divisor $h$ on $Y_j$, which is a component of the projective variety  $Y_{j-1}\times_\OO \kay$.      

We will follow \cite[Lemma~1.10]{lm}.
We begin with the following observation: given a divisor $D$ and an irreducible divisor $Y$ on some scheme $Z$, there exists an integer $b$ such that for any section $s_m$ of $\OO(mD)$, the vanishing order of $s_m$ on $Y$ is at most $mb$. Indeed, choose $b$ sufficiently large such that $(D-bY)\cdot H^{d-1}<0$. Multiplying this inequality by $m$, we see that $\OO(mD-mbY)$ cannot have any regular sections.

We claim that there exists positive integers $b_1,\dots,b_{j-1}$ such that for any section $s\in H^0(X,\calO(m\scrD_\K))$, we have
\[\nu_i(s)\leq mb_i,\  \text{for}\ i=1,\dots,j-1.\]
  Given $s\in H^0(\scrX,\calO(m\scrD))$, restriction to the generic fiber gives $s\in H^0(X,\calO(m\scrD_\K))$.  We choose $b_1$ as in the above paragraph. Then, $v_2$ is given by the vanishing order at $Y_2$ of the restriction of $s$ to $Y_1$, considered as a section of $\OO(D_{1,a})=\OO(D)|_{Y_1}\otimes \OO_{Y_1}(-aY_1)$ for some $a$ with $0\leq a\leq b_1$. Choose $b_2$ to be the max of the $b$'s produced in the above paragraph for $D_{1,a}$ for $0\leq a\leq b_1$. We continue by defining $b_i$'s inductively.

Now, there are finitely many line bundles on $Y_{j-1}$ whose sections we will restrict to $Y_j$. We will take the maximum of the $b$'s chosen for all line bundles. Therefore, it suffices to work with one line bundle $\OO(D)$ on $Y_{j-1}$ at a time. Because we are restricting our attention to $Y_{j-1}$, it also suffices to prove the result for $j=1$. Let us consider elements of $H^0(X,\OO(D))$. These are exactly rational sections of $\OO(m\scrD)$ over $\scrX$ that are allowed to have poles along components of the closed fiber. By multiplying such sections by a suitable power of $\pi$, we can ensure that the section is regular.
Therefore, the Newton--Okounkov body associated to $H^0(X,\OO(D))$ with respect to the flag $Y_1\supsetneq ... \supsetneq Y_d$ is exactly the Minkowski sum of the Newton--Okounkov body of $H^0(\scrX,\OO(\scrD))$ and the line $\R \nu(\pi)$. Since we're only interested in the projection of that Newton--Okounkov body along the line through $\nu(\pi)$, it suffices to consider the complementary slice given by elements of $H^0(X,\OO(D))$ that have neither poles nor zeroes generically along $Y_1$.

Now, the sections under consideration restrict to sections on $Y_1$. We may now apply the above argument with $H$ replaced by $h$ to obtain $b_1,\dots,b_d$.
\end{proof}

\begin{remark}
In the tropical case, if $Y_{d+1}$ is a smooth point of the central fiber, $p_{\pi}$ is the projection along the $(d+1)$-st component.
\end{remark}

\subsection{The tropical case}

We now consider the tropical case where the admissible flag $\scrY_\bullet$ is given by semistable schemes $Y_1,\dots,Y_d$ in $\scrX$ and a point $Y_{d+1}$ in the closed fiber $\scrX_\kay$.  Let $\scrD$ be a divisor on $\scrX$ flat over $\Spec \OO$ whose generic fiber is $D\subset X$. 

We will relate the Newton--Okounkov bodies to overgraphs.  Recall that for $\Delta\subset \R^d$, a convex body and a convex function $\psi\colon\Delta\rightarrow\R$, we define its overgraph in $\R^{d+1}$ to be the set
\[\{(x,t)\mid x\in\Delta,\ t\geq\psi(x)\}.\]
If $\psi$ is piecewise linear, its domains of linearity give a subdivision of $\Delta$.

\begin{theorem} We have a surjection of Newton--Okounkov bodies,
\[p_\pi:\underline{\Delta}_{\scrY_\bullet}(\scrD)\to\Delta_{Y_\bullet}(D).\]
Moreover,  $\underline{\Delta}_{\scrY_\bullet}(\scrD)$ is given as the overgraph of a convex function
\[\psi\colon\Delta_{Y_\bullet}(D)\to\R\]
where $\Delta_{Y_\bullet}(D)$ is the Newton--Okounkov body of $D=\scrD\times_{\OO} \K$ on $X=\scrX\times_{\OO} \K$ with respect to 
\[Y_\bullet=\{X\supsetneq Y_1\times_{\OO} \K \supsetneq ... \supsetneq Y_d \times_{\OO} \K\}.\]
\end{theorem}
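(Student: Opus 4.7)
The plan is to compare the valuation semigroup $\Gamma_{\scrY_\bullet}(\scrD)$ with the classical semigroup $\Gamma_{Y_\bullet}(D)$ associated to the generic-fiber flag, and then read off the overgraph presentation from the translation-invariance lemma. The preceding remark identifies $p_\pi$ in the tropical case with the projection onto the first $d$ coordinates: each $Y_i$ for $i \leq d$ is semistable over $\OO$, so $\pi$ does not vanish along any of them, while $Y_{d+1}$ is a point in the closed fiber of the relative curve $Y_d$ at which $\pi$ is a local equation; thus $\nu(\pi) = (0,\ldots,0,1) \in \Z^{d+1}$.

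For the inclusion $p_\pi(\underline{\Delta}_{\scrY_\bullet}(\scrD)) \subseteq \Delta_{Y_\bullet}(D)$, I would check inductively that for any $s \in H^0(\scrX, \OO(m\scrD))$, the first $d$ coordinates of the valuation vector of $s$ computed along $\scrY_\bullet$ coincide with the valuation vector of $s|_X$ computed along $Y_\bullet$. The key observation is that since each $Y_i$ ($i \leq d$) is flat over $\OO$ with smooth irreducible generic fiber, a local equation $g_i$ of $Y_i$ in $Y_{i-1}$ restricts to a local equation of $Y_i \times_\OO \K$ in $Y_{i-1} \times_\OO \K$, so the successive restrictions $s_i$ reduce on the generic fiber to the classical $(s|_X)_i$ and the vanishing orders agree. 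For the reverse inclusion, any $s \in H^0(X, \OO(mD))$ extends to a rational section of $\OO(m\scrD)$ on $\scrX$ whose only poles lie along components of the closed fiber, each locally cut out by $\pi$; for sufficiently large $k$, $\pi^k s \in H^0(\scrX, \OO(m\scrD))$ and its first $d$ valuations reproduce those of $s$. Passing to closures of cones and slicing at $m=1$ yields the surjectivity.

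For the overgraph assertion, define $\psi(x) := \inf\{t \in \R : (x,t) \in \underline{\Delta}_{\scrY_\bullet}(\scrD)\}$ for $x \in \Delta_{Y_\bullet}(D)$. Surjectivity makes this infimand nonempty, and the nonnegativity of each intermediate valuation (every $s_i$ being a genuine section of its line bundle, not merely a rational section) bounds it below by $0$, so $\psi$ is finite. Closedness of $\underline{\Delta}_{\scrY_\bullet}(\scrD)$ gives $(x,\psi(x)) \in \underline{\Delta}_{\scrY_\bullet}(\scrD)$, and the translation lemma applied in the direction $\nu(\pi)=e_{d+1}$ extends this to $(x,t) \in \underline{\Delta}_{\scrY_\bullet}(\scrD)$ for every $t \geq \psi(x)$. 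Convexity of $\psi$ is then an immediate consequence of the convexity of $\underline{\Delta}_{\scrY_\bullet}(\scrD)$: any convex combination of points $(x_j,\psi(x_j))$ lies in the body, forcing $\psi(\lambda x_1 + (1-\lambda) x_2) \leq \lambda\psi(x_1) + (1-\lambda)\psi(x_2)$. The identification of $\underline{\Delta}_{\scrY_\bullet}(\scrD)$ with the overgraph of $\psi$ is then tautological.

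The main technical care lies in the inductive step identifying the scheme and generic-fiber valuations: although the geometric picture is clear, one must track the line bundles $L_i$ appearing in the definition of $\nu_i$ and verify that the restriction of $s_i$ to the generic fiber really is the classically defined $(s|_X)_i$, using the semistability of each $Y_i$ to ensure local equations restrict compatibly. A smaller but important subtlety is justifying that each intermediate $s_i$ is a regular section of its line bundle, which is what ultimately forces $\psi \geq 0$ and gives the overgraph description meaning.
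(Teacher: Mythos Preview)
Your proposal is correct and follows essentially the same route as the paper: surjectivity via clearing denominators with $\pi^k$, then the overgraph description from positive $e_{d+1}$-translation invariance combined with convexity. You supply more detail than the paper does---in particular the inductive compatibility of the flag valuations with restriction to the generic fiber, and the finiteness and attainment of the infimum defining $\psi$---whereas the paper states these conclusions in two sentences and leaves the verifications to the reader.
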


\begin{proof}
Because any section $s\in H^0(X,\OO(D))$ has some multiple by $\pi$ satisfying $\pi^ks\in H^0(\scrX,\OO(\scrD))$, we have that the projection $p_\pi:\R^{d+1}\to\R^d$ maps $\underline{\Delta}_{\scrY_\bullet}(\scrD)$ surjectively to $\Delta_{Y_\bullet}(D)$.

Now, $\underline{\Delta}_{\scrY_\bullet}(\scrD)$ is closed under positive translation by $e_{d+1}=\nu(\pi)$.  From the convexity of Newton--Okounkov bodies, it follows that $\underline{\Delta}_{\scrY_\bullet}(\scrD)$ is the set of points in $\R^{d+1}$ lying above the graph of a convex function $\psi\colon\Delta_{Y_\bullet}(D)\to \R$.
\end{proof}


We note that Newton--Okounkov bodies of schemes over discrete valuation rings need not be polyhedral. Indeed, one may take a semistable model of a variety with a flag over $\K$ that already has a non-polyhedral Newton--Okounkov body \cite{KLM} and extend the flag to a tropical one.

\begin{remark} Our construction of functions on Newton--Okounkov bodies has some relation to the filtered linear systems that have appeared in the work of Boucksom--Chen \cite{BC}, Witt Nystrom \cite{WN}, and Yuan \cite{Yuan2}. Let $X$ be an algebraic variety or scheme equipped with a flag of subvarieties or subschemes.  Suppose that there is a family of norms $||s||_m$ on sections of the line bundles $L^{\otimes m}=\OO(mD)$ (possibly as the sup-norm coming from a metric on $\OO(D)$). The function on the Newton--Okounkov body is induced by considering the infimum of $\frac{1}{m}\log ||s||_m$ of sections $s$ of $\OO(mD)$  corresponding to a point $\frac{1}{m}\nu(s)$ in the Newton--Okounkov body.  The induced function, called a {\em Chebyshev transform}  is related to metric and adelic volumes in K\"{a}hler and Arakelov geometry. More generally, one may even consider filtrations on sections of $mL$ induced by valuations as in the study vanishing sequences by Boucksom--K\"{u}ronya--Maclean--Szemberg \cite{BKMS}.
\end{remark}

\section{Toric schemes}

In this section, we discuss toric schemes over discrete valuation rings.  See \cite{KKMS} for a classical source or \cite{Gubler} for a rigid analytic perspective.

\subsection{Toric varieties}

We begin by reviewing Newton--Okounkov bodies for smooth projective toric varieties \cite[Section~6.1]{lm}.  Let $N$ be an $d$-dimensional lattice. A toric variety $X(\Delta)$ is specified by a complete rational fan $\Delta$ in $N_\R=N\otimes\R\cong\R^d$. The variety $X(\Delta)$ is smooth if and only if the fan is unimodular, that is, the fan is simplicial and every cone is spanned by integer vectors forming a subset of a basis of $N$. 
Let $T=N\otimes \K^*$ denote the $d$-dimensional algebraic torus acting on $X(\Delta)$. To each $k$-dimensional cone $\sigma$ of $\Delta$, there corresponds an orbit closure $V(\sigma)$ which is a codimension $k$ subvariety.
 Any torus-invariant divisor is given by 
\[D=\sum_\sigma a_\sigma V(\sigma)\]
where the sum is over rays $\sigma$ in $\Delta$ and $a_\sigma\in\Z$. 
Attached to $D$ is a polyhedron $P_D\subset M_\R$ where $M$ is the dual lattice of $N$, defined by
\[P_D=\Conv(\{m\in M \mid \<m,u_\sigma\>\geq -a_\sigma\}).\]
where $u_\sigma\in N$ is the primitive integer vector (with respect to $N$) along $\sigma$.  This polyhedron arises by considering sections of $\OO(D)$: the vector space of sections $H^0(X,\OO(D))$ has a decomposition into $T$-eigenspaces; the lattice points of $P_D$ are exactly the characters of $T$ that arise; for $m\in P_D$, the character $\chi^m$ on $T$ extends to a section of $\OO(D)$ on $X$.  
Indeed, the vanishing order of $\chi^m$ (considered as a section of $\OO(D)$) on the divisor $V(\sigma)$ is $\< m,u_\sigma\>+a_\sigma$ so the inequalities defining $P_D$ are exactly the conditions that $\chi^m$ is regular at the generic point of the torus-invariant divisors. Consequently, $\dim H^0(X,\OO(D))=|P_D\cap M|.$
The line bundle $\OO(D)$ is big if and only if $P_D$ is $d$-dimensional.

Because $X(\Delta)$ is smooth, a $T$-invariant flag $Y_1, Y_2,\dots, Y_n$ can be written as 
\[Y_i=D_1\cap\cdots\cap D_i\]
for a choice of $T$-invariant divisors $D_1,\dots,D_n$ corresponding to rays $\sigma_1,\dots,\sigma_n$. Let $u_1,\dots,u_n$ be the primitive integer vectors along $\sigma_1,\dots,\sigma_n$. We define a linear map $\phi\colon M_\R\to\R^n$ by $\phi(v)=\big(\langle v,u_{
\sigma_i}\rangle+a_{\sigma_i}\big)_{1\leq i\leq n}$. We have the following equality for big line bundles $\OO(D)$:
\[\Delta_{Y_\bullet}(D)=\phi(P_D).\]

\subsection{Toric schemes} 
Complete toric schemes over a discrete valuation ring $\OO$ are described by complete rational fans in $N_{\R}\times \R_{\geq 0}$ where $N\cong \Z^d$ is a lattice.  Given such a fan $\Sigma$, there is a natural morphism of toric varieties $X(\Sigma)_\Z\rightarrow X(\R_{\geq 0})_\Z=\A^1_\Z$ and the toric scheme is given by $\scrX=X(\Sigma)\times_{\A^1} \Spec(\OO)$.  Here, we will map $t$, the coordinate on $\A^1$, to the uniformizer of $\OO$.   We will suppose that $\Sigma$ is a unimodular fan and therefore that the total space $\scrX$ is regular.  If we set $\Delta=\Sigma\cap (N_\R\times\{0\})$, then the generic fiber of $\scrX$ is the toric variety $X=X(\Delta)$. 
The closed fiber of $\scrX$ is a union of toric varieties described combinatorially by the polyhedral complex $\Sigma_1=\Sigma\cap (N_\R\times\{1\})$ in $N_\R\times\{1\}$.  The components of the closed fiber are in bijective correspondence with the vertices of $\Sigma_1$.
 We will suppose that the vertices of $\Sigma_1$ are at points of $N\times\{1\}$ which ensures that $\scrX$ has reduced closed fiber and, therefore, is semistable.  Let $T=N\otimes\K^*$ denote the torus of $X$.  

A $T$-invariant  divisor $D$ on $X$ has many extensions $\scrD$ to $\scrX$.  In particular, we may write $D=\sum_\sigma a_\sigma V(\sigma)$ where $a_\sigma\in\Z$ and  $V(\sigma)$ is the divisor of $X$ corresponding to a ray $\sigma$ of $\Delta$.   Any extension is of the form 
\[\scrD=\sum_\sigma a_\sigma V(\sigma) + \sum_v a_v V(v)\]
where $a_v\in\Z$ and $V(v)$ are the divisors on $\scrX$ corresponding to rays in $N_\R\times\R_{\geq 0}$ through the vertices of $\Sigma_1$.

We construct the Newton--Okounkov body.   Considering the total space $X(\Sigma)$ as an $(d+1)$-dimensional toric variety, we define a polyhedron $P_\scrD\subset M_\R\times \R_{\geq 0}$  by
\[P_\scrD=\Conv\Big(\big\{(m,h)\in M_{\R}\times \R_{\geq 0} \mid \<m,u_\sigma\>\geq -a_\sigma,\ \<m,v\>+h\geq -a_v\big\}\Big).\]
The second set of inequalities come from $v\in\Sigma_1$ corresponding to vertices $(v,1)\in\Sigma_1$.
Note that this projects onto $P_D$ by $(m,u)\mapsto m$.
We may define a piecewise linear convex function on $P_D$,
\[\psi(m)=\max(-a_v-\<m,v\>)\]
where $v$ is taken over vertices of $\Sigma_1$.
Then $P_\scrD$ is the overgraph of $\psi$. 

Now, we will explain how the polyhedron $P_\scrD$ relates to the Newton--Okounkov body of $\scrX$ with respect to a flag $\scrY_\bullet$ of torus-fixed subschemes.  
Following \cite{lm}, we may choose irreducible toric divisors $D_1,\dots,D_{d+1}$ of $X(\Sigma)$ such that $Y_i=D_1\,\cap \cdots\, \cap D_i$.  Suppose that $D_i$ corresponds to a ray in $N_\R\times \R$ whose primitive integer vector is $w_i\in N\times \Z$. Here, $\{w_1,\dots w_{d+1}\}$ is a basis for $N\times\Z$.  
Write 
\[\scrD=\sum a_w V(\sigma_w)\]
where $w$ runs over primitive integer vectors of rays $\sigma_w$ of $\Sigma$.  

We define 
\[\phi_\R\colon M_\R\times \R\to \R^{d+1},\quad \phi\colon(v,h)\mapsto\Big(\big\<(v,h),w_i\big\>+a_{w_i}\Big)_{1\leq i\leq d+1}\]
where the pairing is between $M_\R\times\R$ and $N_\R\times\R$. We have the following analogue of \cite[Prop.~6.1]{lm}:
 
 \begin{theorem} 
Let $\scrD$ be a torus-invariant divisor on $\scrX$ that surjects onto $\Spec \OO$ with generic fiber $D$ such that $\OO(D)$ is a big line bundle on $X$.  
Then,the Newton--Okounkov body of $\OO(\scrD)$ is given by
\[\underline{\Delta}_{\scrY_\bullet}(\OO(\scrD))=\phi_\R(P_\scrD).\]
 \end{theorem}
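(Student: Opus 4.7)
The plan is to adapt the classical toric computation of \cite[Prop.~6.1]{lm} to the relative setting by exploiting that $\scrX$ arises by pullback from the $(d+1)$-dimensional $\Z$-toric variety $X(\Sigma)_\Z$, whose character lattice is $M \times \Z$. For $(m',h) \in M \times \Z$, call the rational function $\chi^{m'}\pi^h$ a \emph{weighted character}. Its principal divisor on $\scrX$ is
\[(\chi^{m'}\pi^h) = \sum_\sigma \langle m', u_\sigma\rangle V(\sigma) + \sum_v \bigl(\langle m', v\rangle + h\bigr) V(v),\]
where I have used $(\pi) = \sum_v V(v)$, which holds because the vertices of $\Sigma_1$ lie at lattice points (the semistability hypothesis). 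Consequently $\chi^{m'}\pi^h$ is a regular section of $\OO(m\scrD)$ precisely when the inequalities cutting out $m P_\scrD$ are satisfied, i.e.\ when $(m'/m,\, h/m) \in P_\scrD$.

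The first main step is to reduce a general section to weighted characters. The $T$-action eigendecomposes
\[H^0(\scrX, \OO(m\scrD)) = \bigoplus_{m' \in m P_D \cap M} H^0(\scrX, \OO(m\scrD))_{m'},\]
with each summand a free $\OO$-module of rank one, generated by $\chi^{m'}\pi^{h_0(m,m')}$ where $h_0(m,m') = \min\{h : (m',h) \in m P_\scrD\}$. This is most cleanly obtained by flat base change from the classical eigendecomposition on $X(\Sigma)_\Z$ under the larger torus $(N \times \Z)\otimes\G_m$, combined with the generic-fiber comparison $H^0(\scrX,\OO(m\scrD)) \otimes_\OO \K = H^0(X, \OO(mD))$. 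It follows that every section of $\OO(m\scrD)$ is an $\OO$-linear combination of weighted characters supported in $m P_\scrD \cap (M\times\Z)$.

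The second step is to evaluate the flag valuation on weighted characters. Since $w_1,\dots,w_{d+1}$ is a $\Z$-basis of $N \times \Z$ (reflecting smoothness of $\scrX$ at the flag point) and each $Y_i = V(w_1) \cap \cdots \cap V(w_i)$ is again torus-invariant, the standard inductive toric computation (cf.\ \cite[Prop.~6.1]{lm}) produces
\[\nu_i(\chi^{m'}\pi^h) = \langle (m',h),\, w_i\rangle + m\, a_{w_i}, \qquad\text{so}\qquad \nu(\chi^{m'}\pi^h) = m\, \phi_\R\bigl(m'/m,\, h/m\bigr).\]
Because $\phi_\R$ is an affine isomorphism, distinct weighted characters have distinct valuation vectors. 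In the expansion of any section as an $\OO$-linear combination of weighted characters, the rank-$(d+1)$ lex valuation then picks out the unique lex-minimum summand with no possible cancellation. Hence $\Gamma_{\scrY_\bullet}(\scrD)$ consists exactly of the points $(m\,\phi_\R(v,h),\, m)$ with $m \geq 1$ and $(v,h) \in P_\scrD \cap \tfrac{1}{m}(M\times\Z)$. Taking the closed convex cone and slicing at height $1$, using density of rational points in the rational polyhedron $P_\scrD$, produces $\phi_\R(P_\scrD)$.

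The main obstacle I expect is a careful justification of the rank-one $\OO$-module eigendecomposition of $H^0(\scrX,\OO(m\scrD))$. In the field case this is immediate from reductivity; over a DVR one must rule out $\pi$-torsion or $\OO$-level mixing between distinct characters. This is handled by flat base change from $X(\Sigma)_\Z$ once one verifies that $\OO(m\scrD)$ is flat over $\OO$, which follows from the hypothesis that $\scrD$ surjects onto $\Spec\OO$ together with the regularity of $\scrX$. The bigness of $D$ then ensures that $P_\scrD$ is $(d+1)$-dimensional, so that $\phi_\R(P_\scrD)$ is a full-dimensional unbounded polyhedron, consistent with the earlier lemmas asserting that $\underline{\Delta}_{\scrY_\bullet}(\scrD)$ is closed under translation by $\nu(\pi)$ and that $p_\pi(\underline{\Delta}_{\scrY_\bullet}(\scrD))$ is compact.
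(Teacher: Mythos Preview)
Your proposal is correct and follows essentially the same route as the paper: decompose sections into weighted characters $\pi^h\chi^{m'}$, identify regularity with membership in $P_\scrD$, compute the flag valuation on monomials via the pairings with $w_1,\dots,w_{d+1}$, and obtain both containments. Your treatment is in fact slightly more careful than the paper's, since you correctly identify $\nu(s)$ as the lex-minimum of the monomial valuation vectors rather than the tuple of independent minima $(b_{w_i}+a_{w_i})_i$ the paper writes down; the conclusion $\nu(s)\in\phi_\R(P_\scrD)$ is unaffected because every term $(m,\val(c_m))$ of a regular section already lies in $P_\scrD$.
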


\begin{proof}
By replacing $\scrD$ with a positive integer multiple, we may suppose that the vertices of $P_\scrD$ are points of $N\times\Z$.

Write the restriction of $s\in H^0((\scrX,\OO(\scrD)))$ to $X$ as
\[s=\sum_m c_m \chi^m\]
for $c_m\in \K$ where the above is a finite sum over characters. The vanishing order of $s$ (considered as a rational function) on the divisor $D_w$ corresponding to a ray $\sigma_w$ of $\Sigma$ is 
\[b_w=\min\Big(\Big\<\big(m,\val(c_m)\big),w\Big\>\Big)\]
where $\val(0)=\infty$
and the pairing is the one between $M_\R\times \R$ and $N_\R\times \R$. Observe that in the above, if $\sigma_w$ is a ray of $\Delta$, then $\<(m,\val(c_m)),w\>=\<m,u_\sigma\>_N$ where the second pairing is the pairing between $M_\R$ and $N_\R$. If $w=(v,1)$ corresponds to a vertex of $\Sigma_1$, then  $\Big\<\big(m,\val(c_m)\big),w\Big\>=\<m,v\>_N+\val(c_m)$. On $\scrX$, we have that following formula for the principal divisor:
\[(s)=\sum b_w D_w.\]
Note that the vanishing order of $s$, considered as a section of $\OO(\scrD)$, on $D_w$ is $b_w+a_w$.

Consequently, a sum of characters like the above corresponds to an element of $H^0((\scrX,\OO(\scrD))$ if and only $b_w\geq -a_w$ for all $w$. In fact, $\pi^h\chi^m\in H^0((\scrX,\OO(\scrD))$ if and only if $(m,h)\in P_\scrD$.

Now, the valuation of such a section of $\OO(\scrD)$ is
\[\nu(s)=(b_{w_1}+a_{w_1},\dots,b_{w_{d+1}}+a_{w_{d+1}}).\]
It follows that $\nu(s)\in\phi_\R(P_\scrD)$. By considering sections of the form $\pi^h\chi^m$, we see that $\underline{\Delta}_{\scrY_\bullet}(\scrD)$ contains $\phi_\R(P_\scrD)$.
 \end{proof}

\section{Newton--Okounkov bodies of curves}

We will relate the Newton--Okounkov bodies of curves over $\OO$ to the Baker-Norine theory of linear systems on graphs.  

\subsection{Review of linear systems on graphs}

We review some results on specialization of linear systems from curves to graphs due to Baker \cite{B}.    Let $\scrC$ be a semistable curve over $\Spec \OO$.  The semistability condition ensures that the closed fiber $\scrC_0$ is reduced with only ordinary double points as singularities.  A node in the closed fiber of $\scrC$ is formally locally described in $\scrC$ by $\OO[x,y]/(xy-\pi)$.

\begin{definition} The {\em dual graph} $\Sigma$ of a  semistable curve $\scrC$ is a graph $\Sigma$ whose vertices $V(\Sigma)$ correspond to  components of the normalization $\pi:\widetilde{\scrC}_0\rightarrow\scrC_0$ and whose edges $E(\Sigma)$ correspond to nodes of $\scrC_0$. For each vertex $v\in V(\Sigma)$, we write $C_v$ for the corresponding component of $\widetilde{\scrC}_0$. 
\end{definition}

We will denote the edges of $E(\Sigma)$ by $e=vw$ even though $\Sigma$ may not be a simple graph. Thus, when we sum over edges adjacent to $v$, we may need to sum over certain vertices more than once and sum over $v$ itself.

A {\em divisor} on $\Sigma$ is an element of the real vector space with basis $V(\Sigma)$.  We write a divisor as $D=\sum_{v\in V(\Sigma)} a_v(v)$ with $a_v\in\R$.  We may write $D(v)=a_v$. The vector space of all divisors is denoted by $\Div(\Sigma)$.  We say a divisor $D$ is effective and write $D\geq 0$ if $a_v\geq 0$ for all $v\in V(\Sigma)$.  We write $D\geq D'$ if $D-D'\geq 0$.  The degree of a divisor is given by
\[\deg(D)=\sum_v a_v.\]
We will study functions $\varphi\colon V(\Sigma)\to \R$. The Laplacian of $\varphi$, $\Delta(\varphi)$ is the divisor on $\Sigma$ given by
\[\Delta(\varphi)=\sum_{v\in V(\Sigma)} \sum_{e\in E(\Sigma) \mid e=vw} (\varphi(v)-\varphi(w))(v).\]
Note that $\Delta(\varphi)$ is of degree $0$.

The specialization map $\rho\colon\Div(\scrC)\rightarrow\Div(\Sigma)$ is defined by,
for  $\cd\in\Div(\scrC)$,
\[\rho(\cd)=\sum_{v\in\Gamma}\deg(\pi^*\OO(\cd)|_{C_v})(v).\]
The specialization of a vertical divisor $\sum_v \varphi(v)C_v$ satisfies
\[\rho\left(\sum_v \varphi(v) C_v\right)=-\Delta(\varphi).\]
For a divisor $H$ on $C_\K$, we will write $\rho(H)$ to mean the specialization of its closure in $\scrC$.  Observe that for $H$, horizontal and effective, we have $\rho(H)\geq 0$.

\begin{definition} Let $\Lambda$ be a divisor on $\Sigma$.  We define the {\em linear system} $L(\Lambda)$ to be the set of functions $\varphi\colon V(\Sigma)\to \R$ on $\Sigma$ with $\Delta(\varphi)+\Lambda\geq 0$.  The {\em effective linear system} $L^+(\Lambda)$ is the subset of $L(\Lambda)$ consisting of everywhere non-negative functions $\varphi$.
\end{definition}

Let $\scrD$ be a divisor on $\scrC$.  Then we will interpret a global section of $\calO(\scrD)$ as a rational function $s$ on $\scrC$ such that $(s)+\scrD\geq 0$.  If we write $(s)=\scrH+V$ where $\scrH$ is a horizontal divisor over $\calO$ and $V$ is a vertical divisor contained in the closed fiber, we may decompose $V$ as
$V=\sum_v \varphi_s(v) C_v$
where we call  $\varphi_s\colon V(\Sigma)\rightarrow \Z$ the {\em vanishing function} of $s$.  For $s$, a rational function on $C$, we will abuse notation and take the vanishing function of $s$ to be vanishing function of the extension of $s$ to $\scrC$.

The following lemma is standard and we include the proof only for completeness.

\begin{lemma} \label{l:spec} Let $D$ be a divisor on $C_\K$ whose closure $\scrD$ has specialization $\Lambda=\rho(\scrD)$.  For  a rational function $s$ on $C$ corresponding to a section of $\OO(D)$ with vanishing function $\varphi$, we have
\[\Delta(\varphi)+\Lambda\geq 0\]
or, in other words, $\varphi\in L(\Lambda)$.
\end{lemma}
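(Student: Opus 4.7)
The plan is to use the fact that a principal divisor on the surface $\scrC$ has zero intersection with every vertical component, translate this into a statement via $\rho$, and then compare with the effectivity on the generic fiber.

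First I would extend $s$ to a rational function on $\scrC$ (still denoted $s$) and write its principal divisor as $(s)_\scrC = \scrH + V$, where $\scrH$ is horizontal (the schematic closure of $(s)_{C_\K}$) and $V = \sum_{v} \varphi(v)\, C_v$ is the vertical part, with $\varphi$ being the vanishing function of $s$ by definition. Since $\scrC$ is a regular arithmetic surface over $\Spec \OO$, the line bundle $\OO((s)_\scrC) \cong \OO_\scrC$ is trivial, hence its restriction to each component $C_v$ of the closed fiber has degree $0$. Via the identification $\rho(\scrE)(v) = \deg(\pi^*\OO(\scrE)|_{C_v})$, this says exactly that $\rho((s)_\scrC) = 0$ in $\Div(\Sigma)$.

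Next I would split this identity using linearity of $\rho$: from $\rho(\scrH) + \rho(V) = 0$ and the formula $\rho\bigl(\sum_v \varphi(v)C_v\bigr) = -\Delta(\varphi)$ recorded just before the lemma, I get
\[
\rho(\scrH) = \Delta(\varphi).
\]
On the generic fiber $s$ is a section of $\OO(D)$, so $(s)_{C_\K} + D \geq 0$ as a divisor on $C_\K$. Taking schematic closures (both sides are horizontal), the effectivity is preserved, yielding $\scrH + \scrD \geq 0$ as a horizontal effective divisor on $\scrC$. For horizontal effective divisors, the specialization map is manifestly non-negative componentwise, so $\rho(\scrH) + \rho(\scrD) = \rho(\scrH) + \Lambda \geq 0$. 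Substituting $\rho(\scrH) = \Delta(\varphi)$ gives $\Delta(\varphi) + \Lambda \geq 0$, which is exactly $\varphi \in L(\Lambda)$.

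I do not expect a serious obstacle: the only substantive input is the vanishing of $\rho$ on principal divisors, which is a standard consequence of the intersection-theoretic fact that a principal divisor meets every vertical fibral curve in degree zero on a regular arithmetic surface. Everything else is bookkeeping: splitting horizontal and vertical parts, and noting that effectivity of horizontal divisors passes to $\rho$.
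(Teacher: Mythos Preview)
Your proposal is correct and follows essentially the same argument as the paper: both use that $\rho$ vanishes on the principal divisor $(s)$ (via the degree-zero restriction to each $C_v$), split into horizontal and vertical parts to obtain $\rho(\scrH)=\Delta(\varphi)$, and then combine with $\scrH+\scrD\geq 0$ horizontal effective to conclude. The only difference is that you spell out slightly more of the intersection-theoretic justification, which the paper compresses into the single line ``$(s)\cdot C_v=0$''.
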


\begin{proof}
Because $s$ is principal, we have $(s)\cdot C_v=0$ for all components of the closed fiber.  If we write $(s)=\scrH+\sum_v \varphi(v) C_v$, we have 
\[0=\rho\left((s)\right)=\rho\big(\scrH+\sum_v \varphi(v) C_v\big)=\rho(\scrH)-\Delta(\varphi).\]
Since $\scrH+D\geq 0$ is horizontal, we have
\[0\leq \rho(\scrH)+\rho(D)=\Delta(\varphi)+\rho(D).\]
\end{proof}
 
The linear system $L(\Lambda)$ has a tropical semigroup structure as noted in \cite{HMY}:

\begin{lemma} \label{l:semimodule} For $\varphi_1,\varphi_2\in L(\Lambda)$, let $\varphi\colon V(\Sigma)\to \R$ be the pointwise minimum of $\varphi_1,\varphi_2\colon V(\Sigma)\to \R$.  Then $\varphi\in L(\Lambda)$.
\end{lemma}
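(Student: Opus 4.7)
The plan is to verify the defining inequality $\Delta(\varphi)+\Lambda \geq 0$ vertex by vertex, exploiting the simple observation that at any vertex where $\varphi$ agrees with $\varphi_i$, the Laplacian of $\varphi$ can only be bigger than that of $\varphi_i$, because the neighbors' values have only gone down.

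Concretely, fix $v \in V(\Sigma)$. Without loss of generality (swapping the roles of $\varphi_1,\varphi_2$ if needed) assume $\varphi(v) = \varphi_1(v) \leq \varphi_2(v)$. For every neighbor $w$ of $v$ (with multiplicity, since $\Sigma$ may have multi-edges and loops), we have $\varphi(w) = \min(\varphi_1(w),\varphi_2(w)) \leq \varphi_1(w)$, and therefore
\[
\varphi(v) - \varphi(w) \;=\; \varphi_1(v) - \varphi(w) \;\geq\; \varphi_1(v) - \varphi_1(w).
\]
Summing over all edges $e = vw$ incident to $v$ gives $\Delta(\varphi)(v) \geq \Delta(\varphi_1)(v)$. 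Adding $\Lambda(v)$ to both sides and using $\varphi_1 \in L(\Lambda)$ yields
\[
(\Delta(\varphi)+\Lambda)(v) \;\geq\; (\Delta(\varphi_1)+\Lambda)(v) \;\geq\; 0.
\]

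Since the vertex $v$ was arbitrary, we conclude $\Delta(\varphi)+\Lambda \geq 0$, i.e.\ $\varphi \in L(\Lambda)$. There is no real obstacle here: the argument is the standard min-plus/tropical verification, and the only mild subtlety worth flagging in the write-up is the bookkeeping for multi-edges and loops in $\Sigma$ (loops contribute $0$ to $\Delta(\varphi)(v)$ regardless, and parallel edges are summed with multiplicity, consistent with the definition of $\Delta$ stated earlier in the paper).
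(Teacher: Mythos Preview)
Your proof is correct and follows essentially the same approach as the paper's own proof: fix a vertex $v$, assume without loss of generality that $\varphi(v)=\varphi_1(v)$, and use $\varphi(w)\leq\varphi_1(w)$ to compare $\Delta(\varphi)(v)$ with $\Delta(\varphi_1)(v)$. Your added remarks on multi-edges and loops are fine but not needed beyond what the paper already sets up.
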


\begin{proof}
Let $v\in V(\Sigma)$.   Without loss of generality, suppose that $\varphi(v)=\varphi_1(v)$.  Then
\begin{eqnarray*}
\Delta(\varphi)(v)+\Lambda(v)&=&\sum_{e=vw} (\varphi(v)-\varphi(w))+\Lambda(v)\\
&\geq &\sum_{e=vw} (\varphi_1(v)-\varphi_1(w))+\Lambda(v)\\
&\geq & 0.
\end{eqnarray*}
\end{proof}

\subsection{Geometric and tropical linear systems}

\begin{definition}
Now, let $\scrD$ be a horizontal divisor on $\scrC$.  Let $m\in \Z_{\geq 1}$.  There is a natural map 
\begin{eqnarray*}
\varrho_m\colon H^0(\scrC,\OO(m\scrD))&\to &L^+(\rho(\scrD))\\
s&\mapsto& \frac{1}{m}\varphi_s
\end{eqnarray*}
where $s\in H^0(\scrC,m\scrD)$ is interpreted as a rational function $s$ with $(s)+m\scrD\geq 0$
and $\varphi_s$ is the vanishing function of $s$.
\end{definition} 
 
We define the {\em Newton--Okounkov linear system} $L^+_\Delta(\scrD)$ to be the subset of $L^+(\rho(\scrD))$ given by the closure of the union of the convex hulls of the images of $\varrho_m$ for $m$ ranging over $\Z_{\geq 1}$. We may extend this definition to horizontal $\Q$-divisors by defining $L^+_\Delta(\scrD)$ to be $\frac{1}{m}L^+_\Delta(m\scrD)$ where $m$ is chosen arbitrarily divisible.
 
\begin{theorem} \label{p:nols} If the generic fiber of $\scrD$ has positive degree, then we have the equality between the Newton--Okounkov linear system and the effective linear system:
\[L^+_\Delta(\scrD)=L^+(\rho(\scrD)).\]
\end{theorem}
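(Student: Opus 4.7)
The plan is to prove the two inclusions separately. For $L^+_\Delta(\scrD)\subseteq L^+(\rho(\scrD))$, given $s\in H^0(\scrC,\OO(m\scrD))$ the inequality $(s)+m\scrD\geq 0$ combined with horizontality of $\scrD$ forces the vertical part $\sum_v\varphi_s(v)C_v$ of $(s)+m\scrD$ to be effective, so $\varphi_s\geq 0$; Lemma~\ref{l:spec} applied to the generic fiber of $m\scrD$ then gives $\Delta(\varphi_s)+m\rho(\scrD)\geq 0$, so $\tfrac{1}{m}\varphi_s\in L^+(\rho(\scrD))$. Since $L^+(\rho(\scrD))$ is closed and convex, the closure of the convex hulls of the images of the maps $\varrho_m$ is still contained in it.

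For the reverse inclusion, set $\Lambda=\rho(\scrD)$ and consider the subset $L^{++}(\Lambda)\subseteq L^+(\Lambda)$ on which $\Delta(\varphi)(v)+\Lambda(v)>0$ strictly for every $v\in V(\Sigma)$. The hypothesis $\deg\Lambda>0$ makes $L^{++}(\Lambda)$ non-empty: the constant function $f\equiv\deg(\Lambda)/|V(\Sigma)|$ satisfies $\deg(f-\Lambda)=0$, so (for $\Sigma$ connected) $f-\Lambda=\Delta(\psi_0)$ for some $\psi_0$, and $\psi:=\psi_0+c$ lies in $L^{++}(\Lambda)$ for $c$ large enough to ensure $\psi\geq 0$. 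Convex combinations $(1-t)\varphi+t\psi$ with $t\in(0,1]$ then show $L^{++}(\Lambda)$ is dense in $L^+(\Lambda)$, and rational-valued $\varphi$ are dense within $L^{++}(\Lambda)$. Since $L^+_\Delta(\scrD)$ is defined as a closure, it suffices to realize each rational $\varphi\in L^{++}(\Lambda)$ as $\tfrac{1}{N}\varphi_s$ for some $s\in H^0(\scrC,\OO(N\scrD))$.

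For such a $\varphi$, pick $N$ so that $N\varphi$ is integer-valued and consider $\scrD':=N\scrD-\sum_v N\varphi(v)C_v$ on $\scrC$. Using $C_w\cdot C_v$ = number of edges of $\Sigma$ joining $v$ and $w$ (for $w\ne v$) together with $\scrC_0\cdot C_v=0$, a direct computation gives $\deg(\OO(\scrD')|_{C_v})=N(\Lambda(v)+\Delta(\varphi)(v))>0$, while the generic fiber of $\OO(\scrD')$ is $\OO(ND)$, of positive degree. For $M\gg 0$ I plan to exhibit a global section $s$ of $\OO(M\scrD')$ whose restriction to each $C_v$ is non-zero. Viewing $s$ as a section of $\OO(MN\scrD)$ via the natural inclusion $\OO(M\scrD')\hookrightarrow\OO(MN\scrD)$, the vanishing-order comparison on $C_v$ shifts by the multiplicity of $C_v$ in $M\scrD'$, namely $-MN\varphi(v)$, so non-vanishing of $s$ on $C_v$ as a section of $\OO(M\scrD')$ forces $\varphi_s(v)=MN\varphi(v)$ for every $v$. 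This yields $\tfrac{1}{MN}\varphi_s=\varphi\in L^+_\Delta(\scrD)$.

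The main obstacle is producing the section $s$ non-vanishing on every $C_v$ simultaneously. My strategy is to use, for each $v$, the exact sequence $0\to\OO(M\scrD'-C_v)\to\OO(M\scrD')\to\OO(M\scrD')|_{C_v}\to 0$ together with $H^1$-vanishing on the regular fibered surface $\scrC$ to see that the restriction maps on $H^0$ become surjective for $M\gg 0$, which, combined with positivity of $\deg(\OO(M\scrD')|_{C_v})$, produces sections not identically zero on $C_v$; the finitely many non-vanishing conditions are Zariski-open on $H^0(\scrC_0,\OO(M\scrD')|_{\scrC_0})$ and can be realized simultaneously. The subtle point is the $H^1$-vanishing, since $\OO(\scrD')$ is not obviously ample on $\scrC$ (horizontal curves disjoint from $\mathrm{supp}(\scrD)$ have zero intersection with it); I expect to circumvent this by replacing $\scrD$ by a linearly equivalent horizontal divisor meeting every relevant vertical component, or by tensoring with an auxiliary ample line bundle and tracking its contribution, and to pass to an unramified extension of $\OO$ if $\kay$ is finite and too small.
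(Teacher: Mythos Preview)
Your first inclusion and the reduction to rational $\varphi$ in the strict interior $L^{++}(\Lambda)$ are fine and match the paper in spirit. The reverse inclusion, however, is argued quite differently from the paper. The paper does \emph{not} attempt to realize a given $\varphi$ exactly as $\tfrac{1}{m}\varphi_s$. Instead it approximates: given $\vartheta\in L^+(\Lambda)$ (normalized so $\min\vartheta=0$), it subtracts an effective integral divisor $E$ from $m\Lambda+\Delta(m\vartheta)$ to leave degree exactly $g$, lifts $E$ to a horizontal $\scrE$, and applies Riemann--Roch on the generic fiber to produce $s$ with $(s)+m\scrD_\K-\scrE_\K\ge 0$. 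A combinatorial diameter bound (Corollary~\ref{c:bound}) then forces $|\tfrac{1}{m}\varphi_s-\vartheta|\le\tfrac{g}{m}\operatorname{diam}(\Sigma)$. This uses nothing beyond Riemann--Roch for smooth curves and graph Laplacian estimates, and is what the paper later adapts to prove the enriched Theorem~\ref{t:exnols}.

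Your route---twist to $\scrD'=N\scrD-\sum_v N\varphi(v)C_v$ and produce a section of $\OO(M\scrD')$ nonvanishing on every $C_v$---is correct, but your stated obstacle is a misdiagnosis. You do not need absolute ampleness of $\OO(\scrD')$ on the surface $\scrC$, and horizontal curves are irrelevant: what you need is \emph{relative} ampleness over $\Spec\OO$, and since $\Spec\OO$ is affine this coincides with ampleness on $\scrC$ and can be checked fiberwise. On the generic fiber $\OO(\scrD')|_{\scrC_\K}\cong\OO(ND)$ has positive degree, and on the nodal closed fiber $\deg(\OO(\scrD')|_{C_v})=N(\Lambda(v)+\Delta(\varphi)(v))>0$ for every component, which is exactly ampleness on a reduced nodal curve. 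Hence $\OO(\scrD')$ is relatively ample, Serre vanishing gives $H^1(\scrC,\OO(M\scrD'-C_v))=0$ for $M\gg0$, and the restriction maps to each $H^0(C_v,\OO(M\scrD')|_{C_v})$ are surjective. The simultaneous nonvanishing is then a union of proper subspaces whose codimensions grow linearly in $M$, so even over a finite residue field one succeeds for $M$ large; no base change or auxiliary ample twist is needed. What your approach buys is an \emph{exact} hit on every rational interior point rather than an $O(1/m)$ approximation; what it costs is relative Serre vanishing in place of the paper's elementary curve Riemann--Roch plus the diameter lemma.
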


Before proving the proposition, we need a preparatory lemma adapted from \cite{KRZB}.

\begin{definition} Let $f\colon V(\Sigma)\rightarrow \R$ be a function.  Set 
\[M(f)=\max_{S\subseteq V(\Sigma)} \Big\{ \big| \sum_{v\in S} f(v) \big| \Big\}.\]
\end{definition}

\begin{lemma} Let $\varphi\colon V(\Sigma)\rightarrow \R$ be a function.  Then,
\[\max \varphi-\min \varphi\leq M(\Delta(\varphi))\operatorname{diam}(\Sigma).\]
\end{lemma}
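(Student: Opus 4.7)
The plan is a two-step reduction. First, I would establish the edge-wise bound $|\varphi(v) - \varphi(w)| \leq M(\Delta(\varphi))$ for every edge $vw \in E(\Sigma)$. Second, letting $v^*, v_* \in V(\Sigma)$ be vertices achieving $\max \varphi$ and $\min \varphi$, and fixing a shortest path $v^* = u_0, u_1, \ldots, u_k = v_*$ in $\Sigma$ so that $k \leq \operatorname{diam}(\Sigma)$, a telescoping sum combined with the edge-wise bound yields
\[
\max \varphi - \min \varphi = \sum_{i=0}^{k-1}\bigl(\varphi(u_i) - \varphi(u_{i+1})\bigr) \leq k\cdot M(\Delta(\varphi)) \leq \operatorname{diam}(\Sigma) \cdot M(\Delta(\varphi)),
\]
which is the desired conclusion.

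The content is therefore in the edge-wise bound, and here the natural approach is a discrete Green's identity applied to a well-chosen super-level set. Fix an edge $vw$ with $\varphi(v) \geq \varphi(w)$ (the case of equality being trivial) and let $S = \{u \in V(\Sigma) : \varphi(u) \geq \varphi(v)\}$, so $v \in S$. I would verify the boundary flux formula
\[
\sum_{u \in S} \Delta(\varphi)(u) = \sum_{\substack{e = xy \in E(\Sigma) \\ x \in S,\ y \notin S}} \bigl(\varphi(x) - \varphi(y)\bigr),
\]
by noting that the pair of contributions coming from an edge internal to $S$ cancel, since $(\varphi(x) - \varphi(y)) + (\varphi(y) - \varphi(x)) = 0$; loops contribute $0$ automatically and parallel edges carry their multiplicities through unchanged. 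By the choice of $S$, every summand on the right is nonnegative; moreover, in the strict case $\varphi(v) > \varphi(w)$ the edge $vw$ contributes exactly $\varphi(v) - \varphi(w)$. Hence $M(\Delta(\varphi)) \geq \sum_{u \in S}\Delta(\varphi)(u) \geq \varphi(v) - \varphi(w)$, establishing the edge-wise bound.

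I expect the main obstacle to be conceptual rather than technical, namely making the right choice of test set $S$. An arbitrary separating set would produce boundary terms of mixed signs, so the single edge $vw$ could be swamped by cancellations and the subset sum would no longer bound $\varphi(v) - \varphi(w)$ from below. The super-level set at level $\varphi(v)$ is the essentially unique choice that forces every crossing edge to contribute with the same sign, isolating the contribution of $vw$ as a clean lower bound. Once that trick is in place, the remaining argument — telescoping along a shortest path — is formal.
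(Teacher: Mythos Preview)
Your proposal is correct and follows essentially the same approach as the paper: reduce to an edge-wise bound via a telescoping sum along a shortest path, and prove the edge-wise bound by summing $\Delta(\varphi)$ over a level set so that internal edge contributions cancel and all boundary terms have the same sign. The only cosmetic difference is that you use the super-level set $\{u : \varphi(u) \geq \varphi(v)\}$ while the paper uses the sub-level set $\varphi^{-1}((-\infty,t])$ with $t = \varphi(v')$; the arguments are otherwise identical.
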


\begin{proof}
It suffices to show that for any edge $e=vw$ in $\Sigma$, $|\varphi(w)-\varphi(v)|\leq M(\Delta(\varphi))$.  Indeed, let $v_0,v_1$ be the vertices where the minimum and maximum of $\varphi$ are achieved, respectively.  By picking a path from $v_0$ to $v_1$ of length at most $\operatorname{diam}(\Sigma)$ and comparing the values of $\varphi$ along that path, we achieve the desired conclusion.

Let $e=v'w'$ with $t=\varphi(v')<\varphi(w')$.  
Set $\Sigma_{\leq t}$ be the subgraph of $\Sigma$ induced by $\varphi^{-1}([0,t])$.     Let $O(\Sigma_{\leq t})$ be the set of outgoing edges, that is, the edges $e=vw\in E(\Sigma)$ with $v\in\Sigma_{\leq t}$ and $w\not\in\Sigma_{\leq t}$. Observe that for such edges $e=vw$, we have $\varphi(w)-\varphi(v)>0$.
Now, 
\begin{eqnarray*}
M(\Delta(\varphi))&\geq& \Big|\sum_{v\in V(\Sigma_{\leq t})} \Delta(\varphi)(v)\Big|\\
&=&\Big|\sum_{v\in V(\Sigma_{\leq t})} \Big(\sum_{e=vw} \big(\varphi(v)-\varphi(w)\big)\Big)\Big|\\
&=&\sum_{e=vw\in O(\Sigma_{\leq t})} (\varphi(w)-\varphi(v))
\end{eqnarray*}
where the last equality holds because the contribution from edges contained in $\Sigma_{\leq t}$ cancel in pairs.
From this we conclude that for any outgoing edge $e=vw$, $\varphi(w)-\varphi(v)\leq M(\Delta(\varphi))$.  
\end{proof}

We have the following corollary:
\begin{corollary} \label{c:bound}
Let $\varphi,\vartheta:V(\Sigma)\rightarrow\R_{\geq 0}$ be functions such that
\[\Delta(\varphi)-\Delta(\vartheta)=F-G\]
where $F$ and $G$ are effective divisors of degree at most $d$.  Suppose that there are (not necessarily distinct) vertices $v,w$ such that $\varphi(v)=\vartheta(w)=0$
Then 
\[\max(|\varphi-\vartheta|)\leq d\operatorname{diam}(\Sigma).\]
\end{corollary}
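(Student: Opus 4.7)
The plan is to apply the previous lemma to the difference $\psi=\varphi-\vartheta$ and then use the vanishing hypotheses to convert a bound on the oscillation $\max\psi-\min\psi$ into a bound on $\max|\psi|$.

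First, I would observe that the Laplacian is linear, so $\Delta(\psi)=\Delta(\varphi)-\Delta(\vartheta)=F-G$. The previous lemma then gives
\[
\max\psi-\min\psi\ \leq\ M(F-G)\,\operatorname{diam}(\Sigma).
\]
Next I would bound $M(F-G)$. For any subset $S\subseteq V(\Sigma)$, since $F$ and $G$ are effective,
\[
-\deg(G)\ \leq\ -\sum_{v\in S}G(v)\ \leq\ \sum_{v\in S}(F-G)(v)\ \leq\ \sum_{v\in S}F(v)\ \leq\ \deg(F),
\]
so $\bigl|\sum_{v\in S}(F-G)(v)\bigr|\leq\max(\deg F,\deg G)\leq d$. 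Hence $M(F-G)\leq d$ and the oscillation bound becomes $\max\psi-\min\psi\leq d\operatorname{diam}(\Sigma)$.

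Finally, I would use the two pointwise vanishing conditions. Since $\varphi,\vartheta\geq 0$, $\varphi(v)=0$, and $\vartheta(w)=0$, we get $\psi(v)=-\vartheta(v)\leq 0$ and $\psi(w)=\varphi(w)\geq 0$, so $\min\psi\leq 0\leq\max\psi$. Consequently
\[
\max|\psi|\ =\ \max(\max\psi,\,-\min\psi)\ \leq\ \max\psi-\min\psi\ \leq\ d\operatorname{diam}(\Sigma),
\]
which is the desired inequality.

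There is no real obstacle here; the previous lemma does the analytic work, and the corollary is essentially a bookkeeping exercise showing (a) that the $M$-quantity of a difference of two effective divisors of degree at most $d$ is bounded by $d$, and (b) that having a zero of each of $\varphi$ and $\vartheta$ lets us upgrade an oscillation bound on $\psi=\varphi-\vartheta$ to a sup-norm bound on $\psi$.
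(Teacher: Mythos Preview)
Your proof is correct and follows essentially the same approach as the paper: apply the preceding lemma to $\psi=\varphi-\vartheta$, bound $M(\Delta\psi)\leq d$, and use the vanishing hypotheses to pass from an oscillation bound to a sup-norm bound. The only cosmetic difference is that you handle both directions at once via $\min\psi\leq 0\leq\max\psi$, whereas the paper bounds $\max(\varphi-\vartheta)$ and then interchanges the roles of $\varphi$ and $\vartheta$.
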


\begin{proof}
By hypothesis, $\min(\varphi-\vartheta)\leq \varphi(v)-\vartheta(v)\leq 0$.
We note that $M(\Delta(\varphi-\vartheta))\leq d$.  Consequently, 
\[\max(\varphi-\vartheta)\leq \max(\varphi-\vartheta)-\min(\varphi-\vartheta)\leq d\operatorname{diam}(\Sigma).\]
Interchanging the roles of $\varphi$ and $\vartheta$, we get the conclusion.
\end{proof}

We now prove Proposition~\ref{p:nols}.
\begin{proof}
Set $\Lambda=\rho(\scrD)$.  From Lemma~\ref{l:spec}, it follows that $L^+_\Delta(\scrD)\subseteq L^+(\Lambda)$.  

Therefore, we must show that any $\vartheta\in L^+(\Lambda)$ can be approximated by some element in the image of $\rho_m$ for some positive integer $m$. First, we may suppose that $\vartheta$ takes rational values.  Pick a sufficiently divisible $m$ such that $m\vartheta$ takes integer values.  Because $\varrho_m(\pi^k s)=\varrho_m(s)+\frac{k}{m}$, we can replace $\vartheta$ by $\vartheta-\min_v \vartheta(v)$ and suppose that $\vartheta\geq 0$ with $\vartheta(v)=0$ for some vertex $v$.

We have that  $\Delta(m\vartheta)+m\Lambda\geq 0$. 
 Pick an effective divisor $E$ on $\Sigma$ such that $E(v)\in\Z$ for all $v\in V(\Sigma)$ and $\Delta(m\vartheta)+m\Lambda-E$ is effective of degree exactly $g$, the genus of $\scrC$.  Choose a horizontal divisor $\scrE$ such that $\rho(\scrE)=E$.   Because $\deg(m\scrD_{\K}-\scrE_{\K})=g$,  the line bundle $\OO(m\scrD_{\K}-\scrE_{\K})$ on $\scrC_{\K}$ has a regular section by the Riemann-Roch theorem.  Therefore, there is a rational function $s$ on $\scrC_{\K}$ with 
\[(s)+m\scrD_{\K}-\scrE_{\K}\geq 0.\]

From $s$ we will find a section whose image under $\rho_m$ approximates $\vartheta$. By multiplying $s$ by some power of $\pi$, we may ensure that $s$ (considered as a rational function on $\scrC$) is regular on the generic points of the components of the closed fiber and does not vanish identically on all of them.
 Consequently, $s$'s vanishing function $\varphi_s$ is non-negative and takes the value $0$ at some vertex $w$.  
Now, 
\begin{eqnarray*}
\Delta(\varphi_s)-\Delta(m\vartheta)&=&(\Delta(\varphi_s)+m\Lambda-E)-(\Delta(m\vartheta)+m\Lambda-E)\\
&=& (\Delta(\varphi_s)+\rho(m\scrD-\scrE))-(\Delta(m\vartheta)+m\Lambda-E)
\end{eqnarray*}
is the difference of two effective degree $g$ divisors by Lemma~\ref{l:spec}.  Consequently, by Corollary~\ref{c:bound}, we have 
\[
\left|\frac{\varphi_s}{m}-\vartheta\right|\leq \frac{g}{m}\operatorname{diam}(\Sigma).\]
Because $\varrho_m(s)=\frac{\varphi_s}{m}$, the conclusion follows by choosing large $m$.
\end{proof} 
  
To handle the case where the divisor $\scrD$ is of degree $0$, we may employee the following result.

\begin{corollary}
Let $\scrD$ be a horizontal divisor on $\scrC$ of non-negative degree. Let $\scrE$ be an effective, non-empty horizontal divisor. Then, we have the equality
\[L^+(\rho(\scrD))=\bigcap_{\varepsilon>0} L^+_\Delta(\scrD+\varepsilon\scrE)\]
where the intersection is taken over rational $\varepsilon>0$.
\end{corollary}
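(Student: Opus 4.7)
The plan is to reduce this to Theorem~\ref{p:nols} applied to the perturbed divisor and then pass to a pointwise limit. Fix a rational $\varepsilon>0$ and set $\Lambda=\rho(\scrD)$ and $E=\rho(\scrE)$. Since $\scrE$ is horizontal and effective, $E\geq 0$ on $\Sigma$, and since $\deg(\scrD_\K)\geq 0$ while $\deg(\scrE_\K)>0$ (as $\scrE$ is non-empty effective horizontal), the $\Q$-divisor $\scrD+\varepsilon\scrE$ has generic fiber of strictly positive degree. Choosing $m$ sufficiently divisible so that $m(\scrD+\varepsilon\scrE)$ is an honest divisor, Theorem~\ref{p:nols} applied to $m(\scrD+\varepsilon\scrE)$, combined with the linearity of $\rho$, the paper's definition $L^+_\Delta(\scrD')=\frac{1}{m}L^+_\Delta(m\scrD')$ for $\Q$-divisors, and the elementary scaling identity $L^+(\lambda\Lambda')=\lambda L^+(\Lambda')$ for $\lambda>0$, yields
\[L^+_\Delta(\scrD+\varepsilon\scrE)=L^+(\Lambda+\varepsilon E).\]
It therefore suffices to establish the purely combinatorial identity
\[L^+(\Lambda)=\bigcap_{\varepsilon>0}L^+(\Lambda+\varepsilon E),\]
where the intersection is over rational $\varepsilon>0$.

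For the forward inclusion, if $\varphi\in L^+(\Lambda)$ then $\varphi\geq 0$ and $\Delta(\varphi)+\Lambda\geq 0$; adding the effective divisor $\varepsilon E$ preserves the inequality, so $\varphi\in L^+(\Lambda+\varepsilon E)$ for every $\varepsilon>0$. For the reverse inclusion, suppose $\varphi$ lies in $L^+(\Lambda+\varepsilon E)$ for every rational $\varepsilon>0$. Then $\varphi\geq 0$ automatically, and for each vertex $v\in V(\Sigma)$ the real number $(\Delta(\varphi)+\Lambda)(v)+\varepsilon E(v)$ is non-negative for all rational $\varepsilon>0$. Letting $\varepsilon\to 0^+$ through rationals---a pointwise limit on the finite set $V(\Sigma)$---gives $(\Delta(\varphi)+\Lambda)(v)\geq 0$ for every $v$, i.e., $\varphi\in L^+(\Lambda)$.

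There is no real obstacle here: both inclusions are essentially formal once Theorem~\ref{p:nols} is in hand. The only point requiring care is the legitimacy of invoking that theorem for the $\Q$-divisor $\scrD+\varepsilon\scrE$, which is precisely what the paper's extension of $L^+_\Delta$ to $\Q$-divisors and the scaling property of $L^+$ are designed to handle.
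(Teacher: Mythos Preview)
Your proof is correct and follows exactly the paper's approach: reduce via Theorem~\ref{p:nols} to the combinatorial identity $L^+(\Lambda)=\bigcap_{\varepsilon>0}L^+(\Lambda+\varepsilon E)$, which the paper dismisses as ``immediate from definitions'' and which you spell out in full. Your additional care in verifying the positive-degree hypothesis for $\scrD+\varepsilon\scrE$ and the scaling compatibility for $\Q$-divisors is welcome detail, not a departure.
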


\begin{proof}
It suffices to prove that
\[L^+(\rho(\scrD))=\bigcap_{\varepsilon>0} L^+(\rho(\scrD+\varepsilon \scrE)).\]
This follows immediately from definitions.
\end{proof}

This comparison between the purely combinatorial Baker--Norine linear system and the algebraically-defined Newton--Okounkov linear system was surprising to these authors. However, it does not capture the combinatorial richness of the Baker--Norine theory as it involves real-valued, rather than integer-valued functions $\varphi$ on graphs. The integer-valued functions can be incorporated into our work by hand. 
Within the vector space of functions $\varphi\colon V(\Sigma)\to\R$, there is a lattice $\varphi\colon V(\Sigma)\to\Z$. 
In \cite{BN}, a divisor $\Lambda$ on a graph $\Sigma$ is said to have non-negative rank if there exists $\varphi\colon V(\Sigma)\to\Z$ such that $\Delta(\varphi)+\Lambda\geq 0$. From this concept, a Riemann--Roch theory for divisors on graphs is developed. Because we may add a constant to $\varphi$ without affecting $\Delta(\varphi)+\Lambda\geq 0$, we may suppose that $\varphi$ is non-negative in the above definition. From this, we can give a asymptotic formulation of non-negative rank.

\begin{corollary}
Let $\scrD$ be a horizontal divisor on $\scrC$ of non-negative degree. Let $\scrE$ be an effective, non-empty horizontal divisor. Then, the specialization $\rho(\scrD)$ has non-negative rank if and only if there exists $\varphi\colon V(\Sigma)\to\Z_{\geq 0}$ such that for all $\varepsilon>0$,
$\varphi\in  L^+_\Delta(\scrD+\varepsilon\scrE)$.
\end{corollary}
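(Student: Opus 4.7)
The plan is to combine the previous corollary with the Baker--Norine definition of non-negative rank. By definition, $\rho(\scrD)$ has non-negative rank if and only if there exists an integer-valued function $\varphi\colon V(\Sigma)\to\Z$ with $\Delta(\varphi)+\rho(\scrD)\geq 0$, and as observed in the paragraph preceding the corollary, since $\Delta(\varphi+c)=\Delta(\varphi)$ for any constant $c\in\Z$, we may always normalize by adding a sufficiently large positive integer so that $\varphi\geq 0$ everywhere. Hence non-negative rank is equivalent to the existence of $\varphi\colon V(\Sigma)\to\Z_{\geq 0}$ lying in $L^+(\rho(\scrD))$.

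Having reduced to this equivalent condition, I would then invoke the immediately preceding corollary, which gives
\[L^+(\rho(\scrD))=\bigcap_{\varepsilon>0} L^+_\Delta(\scrD+\varepsilon\scrE),\]
where the intersection runs over rational $\varepsilon>0$. Substituting into the normalized condition, $\rho(\scrD)$ has non-negative rank if and only if there exists $\varphi\colon V(\Sigma)\to\Z_{\geq 0}$ with $\varphi\in L^+_\Delta(\scrD+\varepsilon\scrE)$ for every $\varepsilon>0$, which is the desired statement.

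The proof is essentially formal, consisting only of unpacking the definitions and citing the previous corollary, so there is no real obstacle. The only point worth flagging is the normalization step: one must check that the shift by a constant does not affect membership in the relevant linear system, and that the image lattice $\varphi\colon V(\Sigma)\to\Z$ is preserved under this shift --- both are immediate. No appeal to Theorem~\ref{p:nols} or to the estimates of Corollary~\ref{c:bound} is needed at this stage, since the heavy lifting has already been absorbed into the previous corollary.
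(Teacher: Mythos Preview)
Your proposal is correct and is precisely the argument the paper intends: the paragraph immediately preceding the corollary sets up the normalization step, and the corollary is then an immediate consequence of the previous corollary's identification $L^+(\rho(\scrD))=\bigcap_{\varepsilon>0} L^+_\Delta(\scrD+\varepsilon\scrE)$. The paper does not write out a separate proof because, as you observe, it is formal once those two ingredients are in hand.
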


From this, one may reformulate the Baker--Norine theory in terms of lattice points in Newton--Okounkov linear systems. It is unknown at this point whether this view leads to any new proofs of known results in the Baker--Norine theory.
 
\subsection{Horizontal-Vertical decomposition}

Now, we will define a decomposition of divisors on $\Sigma$ analogous to the Zariski decomposition to use in our description of Newton--Okounkov bodies of curves.  Recall that the Zariski decomposition of a big $\Q$-divisor $D$ on a smooth projective surface $X$ is a particular decomposition of the linear equivalence class of $D$, $D=P+N$ where $P$ is nef and $N$ is effective.  It has the property that for $m$ such that $mD$ and $mN$ are integral divisors, multiplication by $mN$ gives an isomorphism
\[H^0(X,mP)\to H^0(X,mD).\]

\begin{definition} Let $\Lambda$ be a divisor on $\Sigma$ such that $L^+(\Lambda)$ is non-empty.  The {\em minimal element} of $L^+(\Lambda)$, $\varpi\colon V(\Sigma)\to \R$ is defined by
\[\varpi(v)=\min(\varphi(v)\mid \varphi\in L^+(\Lambda)).\]
\end{definition}

We have the following straightforward lemma following from Lemma~\ref{l:semimodule}.

\begin{lemma} Let $\varpi$ be the minimal element of $L^+(\Lambda)$.  Addition of $\varpi$ gives an isomorphism 
\[L^+(\Lambda-\Delta(\varpi))\to L^+(\Lambda).\]
\end{lemma}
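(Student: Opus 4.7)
The plan is to realize the claimed isomorphism as the affine bijection given by translation by $\varpi$ on the ambient space of functions $V(\Sigma)\to\R$; the content then reduces to matching non-negativity on the two sides. The key linear identity is
\[
\Delta(\psi)+\bigl(\Lambda-\Delta(\varpi)\bigr)\;=\;\Delta(\psi-\varpi)+\Lambda,
\]
which shows that $\psi$ satisfies the Laplacian condition defining $L(\Lambda-\Delta(\varpi))$ if and only if $\psi-\varpi$ satisfies the Laplacian condition defining $L(\Lambda)$. So translation by $\varpi$ is an evident bijection between $L(\Lambda-\Delta(\varpi))$ and $L(\Lambda)$, and I need only verify that it restricts to a bijection on the effective loci.

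One direction is immediate: given $\varphi\in L^+(\Lambda)$, the function $\varphi+\varpi$ is a sum of non-negative functions, and the identity above places it in $L^+(\Lambda-\Delta(\varpi))$. The other direction, which is the heart of the lemma, amounts to showing that every $\psi\in L^+(\Lambda-\Delta(\varpi))$ satisfies $\psi\geq\varpi$ pointwise, so that $\psi-\varpi\in L^+(\Lambda)$. I would argue by contradiction: set $\widetilde\varphi:=\psi-\varpi\in L(\Lambda)$ and suppose $\min_v\widetilde\varphi(v)=-c<0$, attained at some vertex $v_1$. Since adding a constant leaves the Laplacian unchanged, $\widetilde\varphi+c$ still lies in $L(\Lambda)$, and it is pointwise non-negative by construction, so $\widetilde\varphi+c\in L^+(\Lambda)$. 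The minimality of $\varpi$ then forces $\widetilde\varphi+c\geq\varpi$ pointwise; evaluating at $v_1$ gives $0\geq\varpi(v_1)\geq 0$, so $\varpi(v_1)=0$. But then $\psi(v_1)=\widetilde\varphi(v_1)+\varpi(v_1)=-c<0$, contradicting $\psi\geq 0$.

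The main obstacle is precisely this non-negativity comparison. Lemma~\ref{l:semimodule} guarantees that the pointwise minimum $\varpi$ is itself an element of $L^+(\Lambda)$, but the semimodule structure cannot be applied directly to compare $\psi$ and $\varpi$ because they satisfy Laplacian conditions relative to different divisors. The constant-shift trick above is the cleanest way to transport $\psi-\varpi$ into $L^+(\Lambda)$ so that the minimality of $\varpi$ can be brought to bear; everything else in the argument is bookkeeping with the linearity of $\Delta$.
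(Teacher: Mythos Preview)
Your proof is correct. The paper does not actually give a proof of this lemma, only the remark that it is ``straightforward\dots following from Lemma~\ref{l:semimodule}''; your argument supplies precisely the missing details. The role of Lemma~\ref{l:semimodule} is implicit in what you wrote: it is what guarantees that the pointwise-defined $\varpi$ genuinely lies in $L^+(\Lambda)$, which in turn is what makes the minimality inequality $\widetilde\varphi+c\ge\varpi$ available at the key step.

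One point worth making explicit, since you corrected it silently: the arrow in the lemma statement is reversed. Your identity $\Delta(\psi)+(\Lambda-\Delta(\varpi))=\Delta(\psi-\varpi)+\Lambda$ shows that addition of $\varpi$ carries $L(\Lambda)$ to $L(\Lambda-\Delta(\varpi))$, not the other way; equivalently, the map $L^+(\Lambda-\Delta(\varpi))\to L^+(\Lambda)$ is \emph{subtraction} of $\varpi$. (On a two-vertex graph with $\Lambda=2(a)-(b)$ one computes $\varpi=(0,1)$ and finds $\psi=(0,2)\in L^+(\Lambda-\Delta(\varpi))$ with $\psi+\varpi=(0,3)\notin L^+(\Lambda)$.) Since the assertion is that the map is an isomorphism this is a harmless typo, and your contradiction argument establishes exactly the nontrivial half of the correct bijection: that every $\psi\in L^+(\Lambda-\Delta(\varpi))$ satisfies $\psi\ge\varpi$.
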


We can interpret $\Lambda=(\Lambda-\Delta(\varpi))+\Delta(\varpi)$ as a sort of Zariski decomposition.

Moreover, if $L\subseteq L^+(\Lambda)$ is a sub-semigroup, we may define $\varpi_L$ to be the pointwise minimum of $\varphi\in L$.

\subsection{Enriched Newton--Okounkov linear systems}

We will connect the Newton--Okounkov linear systems to the Newton--Okounkov bodies of curves over discrete valuation rings. Such bodies must take into account the vanishing of sections along a flag 
\[\scrY_\bullet=\{\scrC=Y_0\supsetneq Y_1\supsetneq Y_2\}\] 
where $Y_2=\{p\}$ is a  smooth point of the closed fiber.  Consequently, we will enrich the above theory by considering such vanishing.
We will also need to consider elements of  $H^0(\scrC,m\scrD)$  whose horizontal components do not have any components in common with a fixed horizontal divisor in order to gain control over the vanishing at $Y_1$ in the tropical case.

Let $\scrD,\scrF$ be horizontal divisors on $\scrC$.  Let $m\in \Z_{\geq 1}$.

\begin{definition} The {\em $\scrF$-controlled linear system} $H^0(\scrC,m\scrD)_{(\scrF,\varepsilon)}$ is the set of all $s\in H^0(\scrC,m\scrD)$ which, when considered as rational functions,  have the property that their principal divisor $(s)$ contains no component of $\scrF$ with multiplicity greater than $m\varepsilon$. 
\end{definition}

Let $p$ be a smooth point on a component $C_v$ of the closed fiber  of $\scrC$. For a divisor $\scrG$ on $\scrC$, write $v_p(\scrG)$ to be the multiplicity of $p$ in $\scrH\cap C_v$ where $\scrH$ is the horizontal part of $\scrG$.
We consider the natural map
\begin{eqnarray*}
\varrho_{m,p}\colon H^0(\scrC,m\scrD)_{(\scrF,\varepsilon)}&\to &L^+(\rho(\scrD))\times \R\\
s&\mapsto& \left(\frac{1}{m}\varphi_s,\frac{1}{m}v_p\big((s)+m\scrD\big)\right)
\end{eqnarray*}
where $s\in H^0(\scrC,m\scrD)_{\scrF,\varepsilon}$. Observe that the second component of $\varrho_{m,p}(s)$ is the vanishing at $p$ of the horizontal component of the zero locus of $s$, considered as a section of $\OO(\scrD)$.

\begin{definition}
The {\em $\scrF$-controlled $p$-enriched Newton--Okounkov linear system} $L^+_{\Delta,p}(\scrD)_{(\scrF,\varepsilon)}$ is the subset of $L^+(\Lambda)\times \R$ given by the closure of the union of the convex hulls of the images of $H^0(\scrC,m\scrD)_{(\scrF,\varepsilon)}$ under $\varrho_{m,p}$ for $m\in\Z_{\geq 1}$.  When the subscript $(\scrF,\varepsilon)$ is suppressed, this means that we consider the empty divisor.
\end{definition}

For a divisor $\Lambda$ on $\Sigma$, let the {\em p-enriched effective linear system} $L^+_p(\Lambda)$ be the subset of $L^+(\Lambda)\times \R$ given by
\[L^+_p(\Lambda)=\{(\varphi,u)\mid \varphi\in L^+(\Lambda),\ 0\leq u\leq \Delta(\varphi)(v)+\Lambda(v)\}.\]
Observe that if $\Lambda=\rho(\scrD)$ for a horizontal divisor $\scrD$ on $\scrC$, the quantity $\Delta(\varphi)(v)+\Lambda(v)$ is exactly the degree of the divisor $\sum_v \varphi(v)C_v+\scrD$ restricted to $C_v$. So, the second component measures the possible multiplicities of $p$ in the horizontal part of $(s)$ varying from $0$ to the maximal possible degree.

We have the following extension of Proposition~\ref{p:nols}.

\begin{theorem} \label{t:exnols} Let $\scrC$ be a semistable curve over $\Spec \OO$  with horizontal divisors $\scrD$ and $\scrF$ such that the generic fiber of $\scrD$ has positive degree. Let $p$ be a smooth point on a component of $C_v$ of the closed fiber of $\scrC$.  We have the equality between the $\scrF$-controlled $p$-enriched Newton--Okounkov linear system and the $p$-enriched effective linear system:
\[L^+_{\Delta,p}(\scrD)_{(\scrF,\varepsilon)}=L_p^+(\rho(\scrD)).\]
\end{theorem}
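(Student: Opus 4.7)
The plan is to adapt the proof of Proposition~\ref{p:nols} by prescribing vanishing of order $mu$ along a horizontal prime through $p$. The easy inclusion $L^+_{\Delta,p}(\scrD)_{(\scrF,\varepsilon)} \subseteq L^+_p(\rho(\scrD))$ combines Lemma~\ref{l:spec} with a degree bound: for $s \in H^0(\scrC, m\scrD)_{(\scrF,\varepsilon)}$, decompose $(s) + m\scrD = \scrH + \sum_w \varphi_s(w) C_w$ with $\scrH$ effective horizontal, so the multiplicity of $p$ in $\scrH \cap C_v$ lies in $[0, \scrH \cdot C_v] = [0, \Delta(\varphi_s)(v) + m\Lambda(v)]$. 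Dividing by $m$ gives $\varrho_{m,p}(s) \in L^+_p(\Lambda)$, and since $L^+_p(\Lambda)$ is cut out by linear inequalities it is convex and closed, so the closed convex hull of all such images remains inside it.

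For the reverse inclusion, given $(\vartheta, u) \in L^+_p(\Lambda)$, I would produce, for each $\epsilon > 0$ and large $m$, a section $s \in H^0(\scrC, m\scrD)_{(\scrF, \varepsilon)}$ with $|\varrho_{m,p}(s) - (\vartheta, u)| < \epsilon$. By density and closedness we may assume $\vartheta$ and $u$ are rational, $\vartheta \geq 0$ with $\vartheta(w_0) = 0$ at some vertex, and the inequality $u < \Delta(\vartheta)(v) + \Lambda(v)$ is strict (boundary values of $u$ being reached by limits from the interior). Choose $m$ divisible enough that $m\vartheta$ and $mu$ are integers, and fix a constant $N$ larger than the number of irreducible components of $\scrF$ plus two. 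Choose a horizontal prime $P \subset \scrC$ with $P \cap C_v = \{p\}$, which exists because $p$ is smooth on $C_v$ and on $\scrC$; note $\rho(P) = [v]$. Choose an effective integer divisor $E$ on $\Sigma$ such that $\Delta(m\vartheta) + m\Lambda - mu[v] - E$ is effective of degree exactly $g + N$; this is possible for $m$ large since the total degree $m(\deg \scrD - u)$ grows without bound.

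Select a horizontal divisor $\scrE \subset \scrC$ with $\rho(\scrE) = E$ whose irreducible components pass through generic smooth points of the components of the closed fiber, avoiding $p$, $P$, and all components of $\scrF$. Riemann--Roch on $\scrC_\K$ then yields $\dim_\K H^0(\scrC_\K, m\scrD_\K - muP_\K - \scrE_\K) \geq N + 1$, which exceeds the number of codimension-one conditions ``$s$ vanishes at $p$ as a section of this line bundle'' and ``some component of $\scrF$ lies in the zero locus of $s$''. Pick such an $s$ avoiding all these conditions and multiply by an appropriate power of $\pi$ to obtain a regular section on $\scrC$ with $\varphi_s \geq 0$ vanishing at some vertex. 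One then verifies: (i) specializing the effective horizontal divisor $(s)_{\mathrm{hor}} + m\scrD - muP - \scrE$ yields $\Delta(\varphi_s) + m\Lambda - mu[v] - E \geq 0$, whence $\Delta(\varphi_s - m\vartheta)$ is a difference of two effective divisors of degree $g + N$, and Corollary~\ref{c:bound} gives $|\varphi_s/m - \vartheta| \leq (g + N)\operatorname{diam}(\Sigma)/m$; (ii) since $\scrE$ avoids $p$ and $s$ does not extra-vanish at $p$ as a section, $v_p((s) + m\scrD) = mu$; and (iii) the $\scrF$-control holds because the zero locus of $s$ avoids components of $\scrF$, forcing $\operatorname{mult}_F((s)) = -m\operatorname{mult}_F(\scrD) \leq 0 \leq m\varepsilon$ for every component $F$ of $\scrF$.

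The main obstacle is the simultaneous genericity required: $P$ must pass through $p$, $\scrE$ must have prescribed specialization while avoiding $p$, $P$, and every component of $\scrF$, and $s$ must avoid extra vanishing at $p$ and along $\scrF$. Each condition is of codimension one, and the Riemann--Roch slack $N$, chosen independent of $m$, provides the needed flexibility; the resulting error in $\varphi_s$ is $O(1/m)$ via Corollary~\ref{c:bound}. A secondary technical issue is the boundary case $u = \Delta(\vartheta)(v) + \Lambda(v)$, where the linear growth $m(\deg\scrD - u)$ underlying the construction can degenerate; this is handled by approximating such boundary points of $L^+_p(\Lambda)$ from its relative interior before running the construction above.
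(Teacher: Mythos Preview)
Your approach is essentially the paper's: adapt Proposition~\ref{p:nols} by loading the prescribed multiplicity at $p$ into the auxiliary divisor $\scrE$ (you split this off as a separate prime $P$), then use Riemann--Roch and Corollary~\ref{c:bound} to control $\varphi_s$, while the residual effective horizontal divisor of bounded degree absorbs the error in both $v_p$ and the $\scrF$-multiplicities. The paper keeps the residual degree at $g$ and simply observes that the multiplicity at $p$ lands in $[m',m'+g]$ and the $\scrF$-multiplicities are at most $g$, which is $\le m\varepsilon$ for large $m$; you instead inflate the residual degree to $g+N$ and try to use the extra $N$ dimensions to hit $v_p=mu$ and $\operatorname{mult}_F((s))\le 0$ exactly via genericity.

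There is one genuine wrinkle in your genericity step. The condition ``$s$ vanishes at $p$ as a section'' is not a $\K$-linear condition on $H^0(\scrC_\K,m\scrD_\K-muP_\K-\scrE_\K)$, because $p$ lies in the special fiber: whether a given $s$ ``vanishes at $p$'' depends on the $\pi$-normalization and on $\varphi_s(v)$, neither of which is determined by $s$ modulo $\K^*$. So you cannot count it among the codimension-one conditions you are avoiding. The fix is immediate and is exactly what the paper does: drop this condition and observe that the residual horizontal divisor $\scrH'$ has degree $g+N$, hence $v_p(\scrH')\le g+N$ and $\tfrac1m v_p((s)+m\scrD)\in[u,u+(g+N)/m]$. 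Likewise your $\scrF$-genericity, while valid over an infinite field once you know the relevant subspaces are proper, is unnecessary for the same reason: $\operatorname{mult}_F(\scrH')\le g+N\le m\varepsilon$ for $m$ large. With that adjustment your argument and the paper's coincide.
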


This is proved by the same method as Proposition~\ref{p:nols}. We wish to find a section $s\in H^0(\scrC,m\scrD)$ such that $\varrho_{m,p}(s)$ is close to some $(\vartheta,u)$ We modify the proof to choose  $\scrE$ to intersect $C_v$ with a multiplicity $m'$ close to $mu$ at $p$ and for $\scrE$ not to contain any component of $\scrD$ or $\scrF$.  The section $s$ produced by the Riemann-Roch theorem has
$(s)+m\scrD_{\K}-\scrE_{\K}$ equal to a degree $g$ divisor. Then, the horizontal component of $(s)+m\scrD_{\K}$ intersects $C_v$ with a multiplicity between $m'$ and $m'+g$. Moreover, the horizontal components of $(s)+m\scrD$ supported on components of $\scrF$ are of degree at most $g$. By picking a sufficiently large $m$, we obtain a close approximation.

\subsection{Newton--Okounkov bodies of curves}

In this section, we give a combinatorial description of the Newton--Okounkov bodies of curves.

We first consider the tropical case of a flag $\{\scrX\supsetneq Y_1 \supsetneq Y_2\}$ where $Y_1$ is a horizontal divisor and $Y_2=\{p\}$ is a smooth point of the closed fiber.  

\begin{theorem} \label{t:tropicalcurve} Suppose that $Y_1$ is a horizontal divisor intersecting the closed fiber in smooth points and that $Y_2=\{p\}$ is  a point on the component $C_v$.   Let $\scrD$ be a horizontal divisor whose generic fiber has positive degree. Moreover, suppose that $p$ is not contained in $\scrD$. For $t\in \R$, let 
\[L_t=L^+(\rho(\scrD-t Y_1)).\]
The Newton--Okounkov body is the overgraph in $\R^2$ of 
\begin{align*}
a\colon [0,\deg(\scrD)/\deg(Y_1)]&\rightarrow \R\\
t&\mapsto\varpi_{L_t}(v).
\end{align*}
\end{theorem}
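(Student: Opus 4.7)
The plan is to establish both inclusions by relating the valuation vector $(\nu_1(s),\nu_2(s))$ of a section $s$ to combinatorial data on the dual graph. First, because the closed fiber is reduced, the principal divisor of $\pi$ equals $\sum_w C_w$ and has no $Y_1$-component; and since $Y_1$ meets $C_v$ transversally at the smooth point $p$, locally $Y_1=\Spec\calO$ with $\pi$ a uniformizer. Hence $\nu(\pi)=(0,1)$, and $\underline{\Delta}_{\scrY_\bullet}(\scrD)$ is closed under positive translation in the second coordinate, so it is the overgraph of some function on its first-coordinate projection. That projection is $[0,\deg(\scrD)/\deg(Y_1)]$ by the usual boundedness argument on the generic fiber: the vanishing order along $Y_1$ of a section of $\calO(m\scrD)$ on $\scrC_\K$ is at most $m\deg(\scrD)/\deg(Y_1)$. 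It remains to identify the boundary function with $a(t)=\varpi_{L_t}(v)$.

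For the lower bound, fix $s\in H^0(\scrC,\calO(m\scrD))$ and write $(s)+m\scrD=\scrH+\sum_w \varphi_s(w)C_w$ with $\scrH$ horizontal and $\varphi_s\geq 0$; after adjusting $\scrD$ within its linear-equivalence class if needed, assume $\scrD$ has no $Y_1$-component, so $\nu_1(s)$ equals the $Y_1$-coefficient of $(s)$. The inequality $(s)+m\scrD-\nu_1(s)Y_1\geq 0$ places $s$ in $H^0(\scrC,\calO(m\scrD-\nu_1(s)Y_1))$ with the same vertical function $\varphi_s$, and Lemma~\ref{l:spec} gives $\tfrac{1}{m}\varphi_s\in L^+\bigl(\rho(\scrD-(\nu_1(s)/m)Y_1)\bigr)=L_{\nu_1(s)/m}$, whence $\varphi_s(v)/m\geq \varpi_{L_{\nu_1(s)/m}}(v)$. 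In local coordinates $(x,\pi)$ at $p\notin\scrD$, with $Y_1=\{x=0\}$ and $C_v=\{\pi=0\}$ meeting transversally, one reads off $s=g_1^{\nu_1(s)}\pi^{\varphi_s(v)}u$ for a unit $u$, so $s_1=(s/g_1^{\nu_1(s)})|_{Y_1}$ has $\calO$-valuation $\varphi_s(v)$ and $\nu_2(s)=\varphi_s(v)$. Combining these gives $\nu_2(s)/m\geq \varpi_{L_{\nu_1(s)/m}}(v)$, proving $\underline{\Delta}_{\scrY_\bullet}(\scrD)\subseteq \{(t,y):y\geq \varpi_{L_t}(v)\}$.

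For the upper bound, translation invariance reduces matters to exhibiting $(t,\varpi_{L_t}(v))\in \underline{\Delta}_{\scrY_\bullet}(\scrD)$ for each rational $t\in[0,\deg(\scrD)/\deg(Y_1))$; the endpoint follows by continuity from the interior. For such $t$, pick $m$ with $tm\in\Z$, set $\scrD'=\scrD-tY_1$ (horizontal, positive generic degree), and apply Theorem~\ref{t:exnols} with divisor $\scrD'$, point $p\in C_v$, control divisor $\scrF=Y_1$, and parameter $\varepsilon>t$. Since $\varpi_{L_t}\in L^+(\rho(\scrD'))$, the pair $(\varpi_{L_t},0)\in L^+_p(\rho(\scrD'))$ is approximated by $(\tfrac{1}{m}\varphi_{s_m},\tfrac{1}{m}v_p((s_m)+m\scrD'))$ for sections $s_m\in H^0(\scrC,m\scrD')_{(Y_1,\varepsilon)}$. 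Membership in $H^0(\scrC,m\scrD')$ forces the $Y_1$-coefficient of $(s_m)$ to be at least $tm$, while the $\scrF$-control forces it to be at most $m\varepsilon$; hence $\nu_1(s_m)/m\in[t,\varepsilon]$ while $\nu_2(s_m)/m=\varphi_{s_m}(v)/m\to\varpi_{L_t}(v)$. A diagonal argument sending $\varepsilon\downarrow t$ alongside $m\to\infty$ deposits $(t,\varpi_{L_t}(v))$ in the closed body.

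The hardest step will be coordinating the two approximation parameters in the upper bound: Theorem~\ref{t:exnols} only yields asymptotic control of $\tfrac{1}{m}\varphi_{s_m}$, while the $\scrF$-control parameter $\varepsilon$ must simultaneously be driven down to $t$ to pin the first coordinate exactly. Closedness of the Newton--Okounkov body makes the diagonal argument routine, but one must also verify that for each fixed $\varepsilon>t$ and $m$ sufficiently large the sections produced by the Riemann--Roch step inside the proof of Theorem~\ref{t:exnols} genuinely lie in $H^0(\scrC,m\scrD')_{(Y_1,\varepsilon)}$; this follows from the explicit choice of the auxiliary divisor $\scrE$ avoiding the components of $\scrF=Y_1$.
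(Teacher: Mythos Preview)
Your overall strategy matches the paper's: bound from below by showing $\varphi_s\in L_t$, then realize the boundary via Theorem~\ref{t:exnols} applied to $\scrD'=\scrD-tY_1$ with $\scrF$-control on $Y_1$. However, your local computation of $\nu_2$ contains a genuine error that affects both halves of the argument.

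The claim that $s=g_1^{\nu_1(s)}\pi^{\varphi_s(v)}u$ with $u$ a unit at $p$ is false in general. In local coordinates take $s=x\pi+\pi^2$: here $\nu_1(s)=0$ and $\varphi_s(v)=1$, but $s/\pi=x+\pi$ vanishes at $p$ and is not a unit; restricting to $Y_1=\{x=0\}$ gives $s_1=\pi^2$, so $\nu_2(s)=2>\varphi_s(v)$. The point is that the horizontal part of $(s)+m\scrD$ may well pass through $p$ even though $\scrD$ does not. The correct identity, which the paper uses explicitly, is
\[
\nu_2(s)=\varphi_s(v)+v_p\bigl((s)+m\scrD-\nu_1(s)Y_1\bigr),
\]
the second summand recording exactly that horizontal contribution at $p$.

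For the lower bound this is harmless: only the inequality $\nu_2(s)\geq\varphi_s(v)$ is needed, and it is immediate from the displayed formula since the $v_p$ term is nonnegative. For the upper bound, however, you cannot conclude $\nu_2(s_m)/m\to\varpi_{L_t}(v)$ from $\varphi_{s_m}(v)/m\to\varpi_{L_t}(v)$ alone; the horizontal contribution must also be shown to vanish in the limit. Fortunately you have already arranged this without using it: your choice to approximate $(\varpi_{L_t},0)\in L^+_p(\rho(\scrD'))$ forces $\tfrac{1}{m}v_p\bigl((s_m)+m\scrD'\bigr)\to 0$, and since $(s_m)+m\scrD-\nu_1(s_m)Y_1=(s_m)+m\scrD'-(\nu_1(s_m)-mt)Y_1$ with $0\leq\nu_1(s_m)-mt\leq m(\varepsilon-t)$ and $Y_1$ meeting $C_v$ transversally at $p$, the $v_p$ term you actually need is bounded above by the one you controlled. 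Feeding both pieces into the displayed formula then gives $\nu_2(s_m)/m\to\varpi_{L_t}(v)$, exactly as in the paper's proof.
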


\begin{proof}
We first show that the Newton--Okounkov body lies above the graph of $a$.
We observe that for $s\in H^0(\scrC,m\scrD)$, $\nu_2(s)$ is always greater than or equal to the multiplicity of $C_v$ in the principal divisor $(s)$.  Consequently, if $t\in\R$ is such that  $mt$ is an integer and $s$ vanishes to order at  least $mt$ on $Y_1$, then $\varrho_m(s)\in L_t$.  Therefore, 
\[
\frac{1}{m}\nu_2(s)\geq \frac{1}{m}\varphi_s(v)\geq \min(\varphi(v)\mid \varphi\in L_t)=\varpi_{L_t}(v).
\]

For a fixed rational $t$ with $0\leq t<\deg(\scrD)$, let us find an $m\in\Z_{\geq 1}$ and a section $s\in H^0(\scrC,\OO(m\scrD))$ such that $\frac{1}{m}\nu(s)$ is close to $(t,\varpi_{L_t}(v))$. Set $\scrF=Y_1$.  For a small $\varepsilon>0$, by Theorem~\ref{t:exnols}, we may find $m$ large enough such that there exists $s\in H^0(\scrC,m(\scrD-t\scrF))_{(\scrF,\varepsilon)}$ (considered as a rational function) such that 
\[\frac{1}{m}v_p\big((s)+m(\scrD-t\scrF)\big)<\varepsilon\]
and $\frac{1}{m}\varphi_s$ is within $\varepsilon$ of $\varpi_{L_t}$.   It follows that $\frac{1}{m}\nu_1(s)$ is close to $t$. From the fact that
\[\nu_2(s)=\frac{1}{m}\big(\varphi_s(v)+v_p\left((s)+m\scrD-\nu_1(s)\scrF)\right)\big),\]
it follows that all but a small part of $v_2(s)$ comes from the vertical component of $(s)$ along $C_v$. Thus $v_2(s)$ can be made arbitrarily close to $\varpi_{L_t}(v)$.
\end{proof}

Now, we consider the Arakelovian case.

\begin{theorem} \label{t:arakeloviancurve} Let $\scrD$ be a horizontal divisor.  Suppose that $Y_1$ is a component $C_v$ of the closed fiber and $Y_2=\{p\}$ is a smooth point on $C_v$ not contained in $\scrD$. Let $L_t\subset L^+(\rho(D))$ be the tropical sub-semigroup of elements $\varphi$ with $\varphi(v)=t$.
Then the Newton--Okounkov body of $\calO(D)$ is the set of points between the graphs of $a(t)=0$ and 
\[b(t)=\rho(\scrD)(v)+\max(\Delta(\varphi)(v)\mid \varphi\in L_t)\]
for $t\geq 0$.
\end{theorem}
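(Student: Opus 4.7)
The plan is to express the valuation pair $(\nu_1(s), \nu_2(s))$ in the flag $\{\scrC \supsetneq C_v \supsetneq \{p\}\}$ in terms of the data appearing in Theorem~\ref{t:exnols}, and then to identify the Newton--Okounkov body with a linear image of the $p$-enriched effective linear system $L^+_p(\rho(\scrD))$.

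First I would compute the two valuations explicitly. For $s \in H^0(\scrC, \OO(m\scrD))$, viewed as a rational function with $(s) + m\scrD \geq 0$, write $(s) = \scrH_s + \sum_w \varphi_s(w) C_w$ with $\scrH_s$ horizontal. The vanishing order along $Y_1 = C_v$ is the coefficient of $C_v$ in $(s) + m\scrD$; since $\scrD$ is horizontal, this equals $\varphi_s(v)$, so $\nu_1(s) = \varphi_s(v)$. A local equation of $C_v$ near any smooth point of the closed fiber is $\pi$, so $g_1 = \pi$ near $p$, and $s/\pi^{\nu_1(s)}$ has divisor $\scrH_s + \sum_w (\varphi_s(w) - \varphi_s(v)) C_w$. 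Restricting to $C_v$, the horizontal part contributes $\scrH_s \cdot C_v$, and each vertical $C_w$ contributes $C_w \cdot C_v$, supported at the nodes where $C_v$ meets $C_w$. Since $p$ is not a node, these vertical contributions vanish at $p$, and since $p \notin \scrD$, one obtains $\nu_2(s) = v_p(\scrH_s) = v_p((s) + m\scrD)$.

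These identifications show that $\frac{1}{m}(\nu_1(s), \nu_2(s))$ is the image of $\varrho_{m,p}(s)$ under the linear projection $\Pi\colon \R^{V(\Sigma)} \times \R \to \R^2$ given by $(\varphi, u) \mapsto (\varphi(v), u)$. Applying Theorem~\ref{t:exnols} with $\scrF$ the empty divisor, so that the control condition is vacuous, yields $L^+_{\Delta, p}(\scrD) = L^+_p(\rho(\scrD))$, and hence $\underline{\Delta}_{\scrY_\bullet}(\scrD) = \Pi(L^+_p(\rho(\scrD)))$. A pair $(t, u)$ lies in this image exactly when there exists $\varphi \in L_t$ with $0 \leq u \leq \Delta(\varphi)(v) + \rho(\scrD)(v)$. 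The constant function $\varphi \equiv t$ lies in $L_t$ for every $t \geq 0$ since $\rho(\scrD)$ is effective of positive degree, so $L_t$ is nonempty; linear programming on the polyhedron $L_t$ shows the supremum of $\Delta(\varphi)(v)$ is attained and depends concavely on $t$ (by Lemma~\ref{l:semimodule} together with the linearity of $\Delta$). This gives, for each $t \geq 0$, the admissible $u$-interval $[0, b(t)]$ with $b$ as in the statement, exhibiting the Newton--Okounkov body as the region between $a(t) = 0$ and $b(t)$.

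The main obstacle is the local computation $\nu_2(s) = v_p((s) + m\scrD)$: one must check that after dividing by $\pi^{\varphi_s(v)}$ and restricting to $C_v$, no vertical contribution survives near $p$. This uses both that $p$ is a smooth point of $\scrC$, so that other components of the closed fiber do not meet $C_v$ at $p$ and the self-intersection $C_v \cdot C_v$ is concentrated at the nodes, and that $p \notin \scrD$, so $m\scrD$ contributes nothing locally. Once this is in hand, the rest is a formal reduction to Theorem~\ref{t:exnols}.
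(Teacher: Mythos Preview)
Your proposal is correct and follows essentially the same route as the paper: identify $\nu_1(s)=\varphi_s(v)$ and $\nu_2(s)=v_p((s)+m\scrD)$, recognize that $\frac{1}{m}(\nu_1,\nu_2)$ is the projection $(\varphi,u)\mapsto(\varphi(v),u)$ applied to $\varrho_{m,p}(s)$, and invoke Theorem~\ref{t:exnols} to conclude that the Newton--Okounkov body is $\Pi\big(L^+_p(\rho(\scrD))\big)$. You supply more of the local justification for the $\nu_2$ computation than the paper does, and your final paragraph unpacking the image as the region between $a$ and $b$ is likewise more explicit; one small caveat is that your claim that the constant function lies in $L_t$ (and hence $L_t\neq\varnothing$) presumes $\rho(\scrD)\geq 0$, which needs $\scrD$ effective, an assumption implicit but not stated in the theorem.
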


\begin{proof}
Observe that for $s\in H^0(\scrC,m\scrD)$, $\nu_1(s)=\varphi_s(v)$ and 
\[\nu_2(s)=v_p\big((s)+m\scrD\big).\]
The Newton--Okounkov body is the image of $L^+_{\Delta,p}(\scrD)=L_p^+(\rho(\scrD))$ under the map $(\varphi,u)\mapsto (\varphi(v),u)$.  The conclusion follows from Theorem~\ref{t:exnols}.
\end{proof}

\begin{example}
We conclude by giving an example of the Newton--Okounkov body for curves in the tropical and Arakelovian cases for the same linear system.

Let us consider the example in \cite[Section 4.4]{B} of a smooth plane quartic curve of $X$ with genus $g(X)=3$. This is a plane quartic degenerating into a conic $C$ and two lines $\ell_1,\ell_2$. To make the model semistable, one must blow up the intersection point of $\ell_1$ and $\ell_2$, introducing a new component $E$ of the degeneration.
 For this curve, the special fiber and the dual graph are given in the figure:  the vertex $P$ corresponds to the conic; $Q_1,Q_2$ corresponds to the lines; and $P'$ corresponds to the curve $E$.

\begin{figure}[h!]
\begin{center}
   \begin{tikzpicture}
    \draw[thick] (-1.4,.5) -- (1.4,.5);
    \draw[thick] (-1.4,-.5) -- (1.4,-.5);
    \draw[thick] (-.7,1) -- (-.7,-1);
    \node at (-1.6,-.55)%
{\fontsize{9}{4}\selectfont $\ell_1$};
    \node at (-1.6,.55)%
{\fontsize{9}{4}\selectfont $\ell_2$};
    \node at (.45,-1.2)%
{\fontsize{9}{4}\selectfont  $C$};
    \node at (-.75,-1.2)%
{\fontsize{9}{4}\selectfont $E$};


\draw (.5,0) ellipse (.5cm and 1cm);

   \end{tikzpicture}
\quad \hspace{2cm}
\begin{tikzpicture}[thick]
\draw[thick] (0,0) to [out=50,in=130] (2,0);
\draw[thick] (0,0) to [out=-50,in=-130] (2,0);
\draw[thick] (2,0) to [out=50,in=130] (4,0);
\draw[thick] (2,0) to [out=-50,in=-130] (4,0);
\draw[thick] (0,0) to [out=80,in=180] (2,1.5);
\draw[thick] (4,0) to [out=100,in=0] (2,1.5);
\draw [fill=gray] (0,0) circle (2pt); 
\draw [fill=gray] (2,0) circle (2pt); 
\draw [fill=gray] (4,0) circle (2pt); 
\draw [fill=gray] (2,1.5) circle (2pt); 

\node at (0,-.3) {\fontsize{9}{4}\selectfont $Q_1$};
\node at (4,-.3) {\fontsize{9}{4}\selectfont $Q_2$};
\node at (2,-.3) {\fontsize{9}{4}\selectfont $P$};
\node at (2,1.8) {\fontsize{9}{4}\selectfont $P'$};
\end{tikzpicture}
\end{center}
\end{figure}

We will compute the Newton--Okounkov body for a general hyperplane section $\scrD$, whose specialization is given by $\rho({\scrD})=\Lambda=2(P)+(Q_1)+(Q_2)$. Note that for any $\varphi \in L(\Lambda)$ we have 
\begin{eqnarray*}
\Delta(\varphi)&=&(4\varphi(P) -2\varphi(Q_1)-2\varphi(Q_2))(P)\\
&+&(3\varphi(Q_1) -\varphi(P')-2\varphi(P))(Q_1)\\
&+&(3\varphi(Q_2) -\varphi(P')-2\varphi(P))(Q_2)\\
&+&(2\varphi(P') -\varphi(Q_1)-\varphi(Q_2))(P').
\end{eqnarray*}
\begin{itemize}
\item {\bf Tropical case:} we will pick as a flag $\{\scrC=Y_0\supsetneq Y_1 \supsetneq Y_2\}$, with $Y_1$ a degree one horizontal divisor, intersecting the generic fiber $\scrX_\K$ in a general point and intersecting the closed fiber in a generic point $Y_2$ of the conic $C$.  It is straightforward to compute $\varpi_{L_t}$ for $t\in [0,4]$ as follows:
\begin{itemize} 
\item for $t\in [0,2]$, $\varpi_{L_t}=0$,
\item for $t\in [2,4]$, $\varpi_{L_t}(P)=(t-2)/4$,\ $\varpi_{L_t}(Q_1)=\varpi_{L_t}(Q_2)=\varpi_{L_t}(P')=0.$
\end{itemize}
This gives the following Newton--Okounkov body:

\begin{center}
\begin{tikzpicture}[line cap=round,line join=round,x=.7cm,y=.7cm]
\draw[->,color=black] (-1,0) -- (4.6,0);
\draw[->,color=black] (0,-0.5) -- (0,4);
\fill[color=red,fill=red,fill opacity=0.3] (0,0) -- (2,0) -- (4,.5) -- (4,3.5) -- (0,3.5) -- cycle;


\node (a) at (2,-.3) {$_2$};
\node (b) at (4,-.3) {$_4$};
\node (b) at (-.3,.6) {$_{\frac{1}{2}}$};

\draw [color=red] (0,3.5)-- (0,0);
\draw [color=red] (0,0)-- (2,0);
\draw [color=red] (2,0)-- (4,.5);
\draw [color=red] (4,.5)-- (4,3.5);

\end{tikzpicture}
\end{center}

\item{\bf Arakelovian case:} we will pick as a flag $\{\scrC=Y_0\supsetneq Y_1 \supsetneq Y_2\}$ where $Y_1$ is the conic $C$ and $Y_2$ is a general point of $C$. The function $b(t)$ is achieved by the following choices for $\varphi$:
\begin{itemize}
\item for $t\in [0,1/2]$, $\varphi(P)=t,\ \varphi(Q_1)=\varphi(Q_2)=\varphi(P')=0$,
\item for $t\in [1/2,\infty)$, $\varphi(P)=t,\ \varphi(Q_1)=\varphi(Q_2)=\varphi(P')=t-1/2.$
\end{itemize}

Therefore, the Newton--Okounkov body is as follows:
\begin{center}
\begin{tikzpicture}[line cap=round,line join=round,x=.7cm,y=.7cm]
\draw[->,color=black] (-1,0) -- (4,0);
\draw[->,color=black] (0,-0.5) -- (0,4.6);
\fill[color=red,fill=red,fill opacity=0.3] (0,0) -- (3.5,0) -- (3.5,4) -- (.5,4) -- (0,2) -- cycle;

\draw [color=red] (.5,4)-- (3.5,4);
\draw [color=red] (0,0)-- (0,2);
\draw [color=red] (0,2)-- (.5,4);
\draw [color=red] (0,0)-- (3.5,0);

\node (a) at (.5,-.35) {$_{\frac{1}{2}}$};
\node (b) at (-.3,4) {$_4$};
\node (b) at (-.3,2) {$_2$};

\end{tikzpicture}
\end{center}

\end{itemize}

\end{example}

The two examples are reflections of each other because in the tropical case, we chose $Y_1$ with $\rho(Y_1)=P$, the vertex corresponding to the conic while in the Arakelovian case, we chose $Y_1=C$, the conic. In general, the Newton--Okounkov bodies in the two cases will not have such an obvious relation to each other.

\end{document}